\def\A{{\mathcal A}}
\def\N{\mathbb{N}}
\def\F{\mathcal F}
\def\H{\mathcal H}
\def\M{{\mathcal M}}
\def\S{\mathcal S}
\def\la{\langle}
\def\ra{\rangle}
\def\I{{\rm 1\kern-.26em I}}
\def\({\left(}
\def\[{\left[}
\def\){\right)}
\def\]{\right]}
\def\si{\sigma}
\def\Si{\Sigma}
\def\G{\mathcal{G}}
\def\p{\parallel}
\def\<{\langle}
\def\>{\rangle}
\def\Op{\mathfrak{Op}}
\newtheorem{Theorem}{Theorem}[section]
\newtheorem{Remark}[Theorem]{Remark}
\newtheorem{Lemma}[Theorem]{Lemma}
\newtheorem{Corollary}[Theorem]{Corollary}
\newtheorem{Proposition}[Theorem]{Proposition}
\newtheorem{Definition}[Theorem]{Definition}
\numberwithin{equation}{section}
\begin{document}

%-------------------------------------------------------------------------------------------------------
% Title
%-------------------------------------------------------------------------------------------------------

\title{Compactness Criteria for Sets and Operators\\ in the Setting of Continuous Frames}

\date{\today}

\author{M. M\u antoiu and D. Parra \footnote{
\textbf{2010 Mathematics Subject Classification: Primary 46B50, 42B36, Secundary 47G30, 46E30.}
\newline
\textbf{Key Words:}  compact set, compact operator, coorbit space, pseudodifferential operator}
%\begin{itemize}
%\item[] Departamento de Matem\'aticas, Universidad de Chile, Las Palmeras 3425, Casilla 653, Santiago, Chile
%\item[] \emph{E-mail:} Marius.Mantoiu@imar.ro
%\end{itemize}
}
\date{\small}
\maketitle \vspace{-1cm}

%-------------------------------------------------------------------------------------------------------
% Abstract
%-------------------------------------------------------------------------------------------------------

\begin{abstract}
To a generalized tight continuous frame in a Hilbert space $\H$\,, indexed by a locally compact space $\Si$ endowed with a Radon measure, one associates \cite{FR,RU} a coorbit theory converting spaces of functions on $\Si$ in spaces of vectors comparable with $\H$\,. If the continuous frame is provided by the action of a suitable family of bounded operators on a fixed window, a symbolic calculus emerges \cite{Ma1}, assigning operators in $\H$ to functions on $\Si$\,. We give some criteria of relative compactness  for sets and for families of compact operators, involving tightness properties in terms of objects canonically associated to the frame.
Particular attention is dedicated to a magnetic version of the pseudodifferential calculus.
\end{abstract}

\bigskip

\bigskip
{\bf Address}

\medskip
Departamento de Matem\'aticas, Universidad de Chile,

Las Palmeras 3425, Casilla 653, Santiago, Chile

\emph{E-mail:} mantoiu@uchile.cl

\emph{E-mail:} parra.alejandro@gmail.com	

\bigskip
\medskip
{\bf Acknowledgements:}
The authors are supported by {\it N\'ucleo Cientifico ICM P07-027-F "Mathematical Theory of Quantum and Classical Magnetic Systems"}.
The first named author is grateful to the Erwin Schr\"odinger Institute and to the Mittag-Leffler Institute, where part of this this paper has been written.

%-------------------------------------------------------------------------------------------------------
\section{Introduction}\label{duci}
%-------------------------------------------------------------------------------------------------------

The main goal of this article is to provide compactness criteria for bounded subsets $\Omega$ of some Banach spaces $\mathcal Y$ in terms of generalized continuous frames \cite{FG,FR,RU,Ma1}. It will be convenient in this Introduction to refer to the framework of \cite{Ma1}, less general than that of \cite{FR,RU}, but having a richer mathematical structure.

In \cite{Ma1} the framework is built on a family $\{\pi(s)\!\mid\! s\in\Si\}$ of bounded operators acting in a Hilbert space $\H$\, indexed by the points of a locally compact space $\Si$ endowed with a Radon measure $\mu$\,. Under certain convenient axioms (the square integrability condition (\ref{barrymore}) is basic), one introduces and studies a map $\phi^\pi$ from $\H\times\H$ into $L^2(\Si)$\,, and a symbolic calculus $f\mapsto\Pi(f)$ sending functions on $\Si$ into operators on $\H$ (in the spirit of a pseudodifferential theory). Formally the definitions are
\begin{equation}\label{dragaoltenu}
[\phi^\pi(u,v)](s):=\<\pi(s)u,v\>\,,\quad\ u,v\in\H\,,\ s\in\Si
\end{equation}
and
\begin{equation}\label{ionescu}
\Pi(f):=\int_\Si\!d\mu(s)f(s)\pi(s)^*\,,\quad\ f\in L^2(\Si)\,,
\end{equation}
but with suitable interpretations (using Gelfand triples for instance) they can be pushed to much more general situations. 

If $\Si$ is a locally compact group and $\pi:\Si\rightarrow\mathbb B(\H)$
is a (maybe projective) unitary, strongly continuous (maybe irreducible) representation, the framework is standard \cite{FG}. The function $\phi^\pi$ is called {\it the representation coefficient map} and $\Pi$ is {\it the integrated form of this representation}. 

However, in many physically or/and mathematically motivated situations $\Si$ is not a group. Even when it is, $\pi$ is not a projective representation; the operators $\pi(s)\pi(t)$ might not be connected to $\pi(st)$ (when the later exists) in some simple way. The need of a formalism covering non-group-like situations motivated the approach in \cite{Ma1}, to which we refer for more technical details, for constructions of involutive algebras and of coorbit spaces of vectors and symbols and for relevant examples. Actually coorbit spaces of vectors have been previously defined in \cite{FR,RU} starting with a continuous frame $W:=\{w(s)\!\mid\! s\in\Si\}\subset\H$\,, following the fundamental approach of \cite{FG}. These references, besides a deep investigation, also contain many examples and motivational issues to which we send the interested reader. In such a generality, however, the symbolic calculus $\Pi$ and the connected developments of \cite{Ma1} are not available.

To get the situation treated in \cite{Ma1} one sets essentially $w(s):=\pi(s)^*w$ for some fixed nomalised vector (window) of $\H$\,.
It is fruitful to consider the partial function $u\mapsto\phi^\pi_w(u):=\phi^\pi(u,w)$ for fixed $w$ and clearly this can be generalized to an isometry
\begin{equation}
\mathcal H\ni u\mapsto\phi_W(u):=\<u,w(\cdot)\>\in L^2(\Si)
\end{equation}
for continuous frames which are not defined by families of operators. The necessary notions from \cite{FR,Ma1,RU} are briefly reviewed in section \ref{omorniella}.

Let us come back to compactness issues. Let us fix an infinite dimensional Banach space $\mathcal Y$ and a bounded subset $\Omega$ of $\mathcal Y$\,. We assume that $\mathcal Y$ is somehow defined in the setting $(\H,\Si,\pi,\phi^\pi,\Pi)$\,. Tipically it will be one of the coorbit spaces of vectors constructed in terms of the frame\,; the Hilbert space $\H$ itself is a particular but important example. To be relatively compact $\Omega$ needs extra properties beyond boundedness, and it is natural to 
search for such properties in terms of the maps $\pi,\phi^\pi$ or $\Pi$\,. The following definition (inspired by \cite{Fei}) will be convenient.

\begin{Definition}\label{rucareanu}
Assume that the Banach space $\mathcal Y$ is endowed with a structure of Banach left module over a normed algebra $\A$\,, meaning that a left module structure $\A\times\mathcal Y\ni(a,y)\mapsto a\cdot y\in\mathcal Y$ is given and the relation $\p\! a\cdot y\!\p_\mathcal Y\,\le\,\p\! a\!\p_\A\,\p\! y\!\p_\mathcal Y$ is satisfied for every $a\in\A$ and $y\in\mathcal Y$\,. Let $\A^0\subset\A$\,; we say that {\rm the bounded set $\,\Gamma\subset\mathcal Y$ is $\A^0$-tight} if for every $\epsilon>0$ there exists $a\in\A^0$ with $\sup_{y\in\Gamma}\!\p\! a\cdot y-y\!\p_\mathcal Y\,\le\epsilon$\,.
\end{Definition}

In various situations, depending on the meaning of $\cdot$ and $\p\!\cdot\!\p_\mathcal Y$\,, tightness could have a specific intrepretation (equicontinuity, uniform concentration, etc). Note that $\mathcal Y$ is naturally a Banach left module over $\mathbb B(\mathcal Y)$\,, the Banach algebra of all bounded lineal operators in $\mathcal Y$\,, so very often we choose $\A^0\subset\mathbb B(\mathcal Y)$\,. 

Most of our results will involve characterization of relative compactness of $\Omega$ in terms of its tightness with respect to a (finite) family of Banach module structures $\{\A_j\times\mathcal Y\,\mapsto\mathcal Y\}_{j\in J}$ and corresponding subsets $\{\A_j^0\subset\A_j\}_{j\in J}$\,. An occuring generalization is using for characterization tightness of the image $\Omega':=\psi(\Omega)$ of $\Omega$ into another Banach left module. 

For illustration, let us reproduce here a slightly simplified version of Theorem \ref{rebengiuk}. We ask the map $\pi^*(\cdot):=\pi(\cdot)^*:\Si\rightarrow\mathbb B(\H)$ to be strongly continuous, to satisfy $\pi^*(s_1)=1\,$ for some $s_1\in\Si$  and to verify condition (\ref{barrymore}). Note that $C_{\rm c}(\Si)$
is contained in the $C^*$-algebra $C_0(\Si)$ of complex continuous functions on $\Si$ vanishing at infinity, which acts on $L^2(\Si)$ by pointwise multiplication.

\begin{Theorem}\label{rebengiug}
A bounded subset $\Omega$ of $\,\H$ is relatively compact if and only if any one of the next equivalent conditions holds:
\begin{enumerate}
\item
For some (every) $\,w\in\H$ the family $\phi^\pi_w(\Omega)$ is $C_{\rm c}(\Si)$-tight in $L^2(\Si)$\,.
\item
The set $\Omega$ is $\Pi\[C_{\rm{c}}(\Si)\]$-tight\,; here we use the Banach module $\mathbb B(\H)\times\H\rightarrow\H$\,.
\item 
One has $\,\underset{s\rightarrow s_0}{\lim}\,\underset{u\in\Omega}{\sup}\p\!\pi^*(s)u-\pi^*(s_0)u\!\p\,=0\,$ for every $s_0\in\Si$\,.
\end{enumerate}
\end{Theorem}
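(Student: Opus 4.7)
The plan is to run the cycle (relative compactness)\,$\Rightarrow$\,(3)\,$\Rightarrow$\,(2)\,$\Rightarrow$\,(relative compactness), and separately to pair (1) with relative compactness in both directions. For (relative compactness)\,$\Rightarrow$\,(3): given $s_0\in\Si$ and $\epsilon>0$, total boundedness supplies a finite $\eta$-net $\{u_1,\ldots,u_N\}\subset\Omega$, with $\eta$ calibrated against a local bound $M$ on $\|\pi^*(s)\|$ near $s_0$ (obtained from strong continuity by a Banach--Steinhaus argument); pointwise continuity of $s\mapsto\pi^*(s)u_i$ at $s_0$ then yields a common neighborhood $V\ni s_0$ on which $\|\pi^*(s)u_i-\pi^*(s_0)u_i\|$ is small for all $i$, and the triangle inequality upgrades this to uniformity over $u\in\Omega$. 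For (3)\,$\Rightarrow$\,(2): invoke (3) at $s_0=s_1$, where $\pi^*(s_1)=1$, to obtain a neighborhood $V$ of $s_1$ with $\sup_{u\in\Omega}\|\pi^*(s)u-u\|<\epsilon$ on $V$, and choose by Urysohn $\chi\in C_{\rm c}(\Si)$ with $\supp\chi\subset V$, $\chi\ge 0$, $\int\chi\,d\mu=1$. Then
\begin{equation*}
\|\Pi(\chi)u-u\|=\Bigl\|\int\chi(s)[\pi^*(s)u-u]\,d\mu(s)\Bigr\|\le\int\chi(s)\,\|\pi^*(s)u-u\|\,d\mu(s)<\epsilon,
\end{equation*}
uniformly on $\Omega$, exhibiting $\Pi(\chi)\in\Pi[C_{\rm c}(\Si)]$ as the required near-identity.

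For (2)\,$\Rightarrow$\,(relative compactness): the square integrability hypothesis~(\ref{barrymore}) makes $\Pi$ map $L^2(\Si)$ into the Hilbert--Schmidt operators --- a core fact of the framework in \cite{Ma1} --- so $\Pi(\chi)$ is compact for every $\chi\in C_{\rm c}(\Si)\subset L^2(\Si)$. Tightness then provides $\chi$ with $\Omega\subset\Pi(\chi)(\Omega)+\{v\in\H:\|v\|\le\epsilon\}$; the first summand is precompact, $\Omega$ is therefore totally bounded, and by completeness of $\H$ it is relatively compact. For (relative compactness)\,$\Leftrightarrow$\,(1): the same square integrability makes $\phi^\pi_w:\H\rightarrow L^2(\Si)$ a constant multiple of an isometric embedding for any normalized $w$, so relative compactness in $\H$ transfers to relative compactness of $\phi^\pi_w(\Omega)$ in $L^2(\Si)$. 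The forward direction, \emph{for every} $w$, uses that any compact subset of $L^2(\Si)$ is $C_{\rm c}(\Si)$-tight (finite $\epsilon$-net plus density of $C_{\rm c}(\Si)$ in $L^2(\Si)$). The reverse direction, \emph{for some} $w$, relies on compactness of $M_\chi\phi^\pi_w:\H\rightarrow L^2(\Si)$ for every $\chi\in C_{\rm c}(\Si)$: continuity of $s\mapsto\pi^*(s)w$ renders the family $\{\chi(\cdot)\<u,\pi^*(\cdot)w\>:\|u\|\le R\}$ uniformly bounded and equicontinuous on the compact $\supp\chi$, so Arzel\`a--Ascoli together with finiteness of $\mu$ on compacts gives precompactness in $L^2(\Si)$; the tightness assumption then approximates $\phi^\pi_w(\Omega)$ within $\epsilon$ by the precompact set $M_\chi\phi^\pi_w(\Omega)$.

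The genuine content lies in the compactness of $\Pi(\chi)$ and of $M_\chi\phi^\pi_w$ for $\chi\in C_{\rm c}(\Si)$: the first is a Hilbert--Schmidt estimate derived from square integrability, already available in \cite{Ma1}, while the second is a direct Arzel\`a--Ascoli argument resting on strong continuity of $\pi^*$ and the compactness of $\supp\chi$. Beyond these two facts the proof reduces to finite $\epsilon$-nets, density of $C_{\rm c}(\Si)$ in $L^2(\Si)$, the standard approximation-to-the-identity packaged as $\Pi(\chi)$, and the triangle inequality.
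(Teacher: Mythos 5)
Your proof is correct, and on the $\Pi$-side it essentially coincides with the paper's argument for Theorem \ref{rebengiuk}: your (3)$\Rightarrow$(2) is the paper's averaging step $5\Rightarrow 6$ specialized to $s_0=s_1$ (where $\pi^*(s_1)=1$), and your (2)$\Rightarrow$(compactness) is exactly the paper's $4\Rightarrow 1$, both resting on the fact that $\Pi$ is unitary from $L^2(\Si)$ onto $\mathbb B_2(\H)$, so $\Pi(\chi)$ is compact. Where you genuinely diverge is in the two remaining links. For (compactness)$\Rightarrow$(3) the paper goes through condition 4 of Theorem \ref{rebengiuk}, upgrading strong to norm continuity by composing $\pi^*(\cdot)$ with the compact factor $\Pi(f)$, whereas you argue directly with a finite net plus a local bound on $\p\!\pi^*(s)\!\p$ obtained from Banach--Steinhaus; this is shorter and bypasses $\Pi$ entirely. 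For (compactness)$\Leftrightarrow$(1) the paper simply invokes its coorbit result, Theorem \ref{damon}, with $\M=L^2(\Si)$ and $\H={\sf co}\!\[L^2(\Si)\]$, whose proof runs through weak-$^*$ compactness in $\G'_\si$ (Lemma \ref{aniston}) and dominated convergence in a solid function space; you instead prove compactness of $M_\chi\circ\phi^\pi_w$ for $\chi\in C_{\rm c}(\Si)$ by Arzel\`a--Ascoli and pull total boundedness back through the isometry. Your route is more elementary and self-contained for the Hilbert-space case; the paper's route buys the same statement simultaneously for all coorbit spaces $\widetilde{{\sf co}}(\M)$, which is why it is organized that way. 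Two small points you should make explicit: in (1) the window must be nonzero (say normalized), since $\phi^\pi_w$ is only the multiple $\p\! w\!\p$ of an isometry and the ``for some $w$'' direction is vacuous for $w=0$; and in (3)$\Rightarrow$(2) your normalized Urysohn bump requires neighborhoods of $s_1$ to have positive $\mu$-measure --- an assumption the paper's own step $5\Rightarrow 6$ uses implicitly as well.
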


Two possible generalizations can be taken into account: (a) replace $W:=\{\pi(s)^*w\!\mid\! s\in \Si\}$ by a general continuous frame and (b) replace $\H$ by a coorbit space. Both these generalizations are considered in section \ref{onella}, but only involving the characterization $1$ of relative compactness of $\Omega$ in terms of tightness of the set $\phi_W(\Omega)$\,. One obtaines an extension of the main result of \cite{DFG}, which required $\Si$ to be a locally compact group and $w(s)=\pi(s)^*w$ for some irreducible integrable unitary representation $\pi:\Si\rightarrow\mathbb B(\H)$\,. Although substantially more general, our result allows almost the same proof as in \cite{DFG}; we include this proof for convenience and because some technical details are different. 

In fact the characterizations 2 and 3, suitably modified, would also be available in coorbit spaces. However this would need many preparations from the paper \cite{Ma1} (submitted for publication) and would involve some implicit assumptions requiring a lot of exemplifications. Therefore, at least for the moment, we decided to include compactness characterization in terms of $\pi$ and $\Pi$ only for the important case of Hilbert spaces.

We are also interested in families of compact operators. Two Banach spaces $\mathcal X$ and $\mathcal Y$ being given, the problem of deciding {\it when a set $\mathscr K$ of compact operators $:\mathcal X\rightarrow\mathcal Y$ is relatively compact in the operator norm topology} is already a classical one; for more details and motivations cf. \cite{An,GF,My,Pa,SPD} and references therein. Clearly, compactness results for subsets of $\mathcal Y$ (as those given in sections \ref{onella} and \ref{poponel}) are crucial, but extra refinaments are needed: For $\mathscr K$ to be a relatively compact set of compact operators, it is necessary but not sufficient that $\{Sx\!\mid\,\p\! x\!\p_\mathcal X \le 1,S\in\mathscr K\}$ be relatively compact in $\mathcal Y$\,; this even happens in Hilbert spaces. We discuss this problem in section \ref{onecu}; of course, if $\mathscr K:=\{S\}$ is a singleton, one gets easily criteria for the operator $S$ to be compact.

In a final Section we treat what we think to be an important example, {\it the magnetic Weyl calculus} \cite{Ma1,IMP}, which describes the quantization of a particle moving in $\mathbb R^n$ under the action of a variable magnetic field $B$\ (a closed $2$-form on $\mathbb R^n$)\,. It is a physically motivated extension of the usual pseudodifferential theory in Weyl form, which can be recovered for $B=0$\,. One reason for including this here is that it definitely stays outside the realm of projective group representations and the results on compactness existing in the literature do not apply. But it is a rather simple particular case of the formalism developped in \cite{Ma1} and the compactness criteria of the present paper work very well. We decided to present only the Hilbert space theory, having in view certain applications to the spectral theory of magnetic quantum Hamiltonians that will hopefully addressed in the future. A second reason to treat the magnetic Weyl calculus here is that it presents extra mathematical structure which has important physical implications and which also enlarges the realm of compacness criteria. If the magnetic field is zero, part of our Theorem \ref{arsinel} reproduces the classical Riesz-Kolmogorov Theorem (cf. \cite{DFG,Fei,GI} for useful discussions). Extensions of this classical result can be found in \cite{GI} and especially in \cite{Fei}; since these references use essentially the group-theoretic framework, they cannot be applied to our section \ref{one}. It would be interesting to generalize the double module formalism of \cite{Fei} to cover at least the magnetic Weyl calculus and its generalization to nilpotent Lie groups \cite{Pe,BB1,BB2}.

%-------------------------------------------------------------------------------------------------------
\section{Coorbit spaces and quantization rules associated to continuous frames}\label{omorniella}
%-------------------------------------------------------------------------------------------------------

We start with some {\it notations and conventions}:

We denote by $\overline\H$ the conjugate of the (complex separable) Hilbert space $\H$\,; it coincides with $\H$ as an additive group but it is endowed with the scalar multiplication $\alpha\cdot u:=\overline\alpha u$ and the scalar product $\<u,v\>':=\overline{\<u,v\>}$\,. If $u,v\in\H$ the rank one operator $\lambda_{u,v}\equiv\<\cdot,v\>\<u|$ is given by $\lambda_{u,v}(w):=\<w,v\>u$\,.

Let $\Si$ be a Hausdorff locally compact and $\si$-compact space endowed with a fixed Radon measure $\mu$\,. By $C(\Si)$ one denotes the space of all continuous functions on $\Si$\,, containing the $C^*$-algebra ${\rm BC}(\Si)$ composed of bounded continuous functions. The closure in ${\rm BC}(\Si)$ of the space $C_{\rm{c}}(\Si)$ of continuous compactly supported complex functions on $\Si$ is the $C^*$-algebra $C_0(\Si)$ of continuous functions vanishing at infinity.  The Lebesgue space $L^2(\Si;\mu)\equiv L^2(\Si)$ will also be used, with scalar product $\<u,v\>_{L^2(\Si)}=:\<u,v\>_{(\Si)}$\,.

For Banach spaces $\mathcal X,\mathcal Y$ we set $\mathbb B(\mathcal X,\mathcal Y)$ for the space of linear continuous operators from $\mathcal X$ to $\mathcal Y$ and use the abbreviation $\mathbb B(\mathcal X):=\mathbb B(\mathcal X,\mathcal X)$\,. The particular case $\mathcal X':=\mathbb B(\mathcal X,\mathbb C)$ refers to the topological dual of $\mathcal X$\,. By $\mathbb K(\mathcal X,\mathcal Y)$ we denote the compact operators from $\mathcal X$ to $\mathcal Y$. If $\H$ is a Hilbert space, $\mathbb B_2(\mathcal H)$ is the two-sided $^*$-ideal of all Hilbert-Schmidt operators in $\mathbb B(\H)$\,; it is a Hilbert space with the scalar product $\<S,T\>_{\mathbb B_2(\mathcal H)}:={\rm Tr}(ST^*)\,$.

\medskip
We recall now the concept of tight continuous frame and the construction of coorbit spaces, slightly modifying the approach of \cite{FR,RU}. Let us fix a family $W:=\{w(s)\!\mid\!s\in\Si\}\subset\H$ that is a tight continuous frame; the constant of the frame is assumed to be $1$ by normalizing the measure $\mu$\,. This means that the map $s\mapsto w(s)$ is assumed weakly continuous and for every $u,v\in\H$ one has
\begin{equation}\label{yak}
\<u,v\>=\int_\Si\!d\mu(s)\<u,w(s)\ra\la w(s),v\>\,.
\end{equation}
Clearly $W$  is total in $\H$ and defines an isometric operator
\begin{equation}\label{sambur}
\phi_{W}:\H\rightarrow L^2(\Si)\,,\quad\ \[\phi_{W}(u)\](s):=\<u,w(s)\>
\end{equation}
with adjoint $\phi_{W}^\dagger:L^2(\Si)\rightarrow\H$ given (in weak sense) by 
\begin{equation}\label{bivol}
\phi_{W}^\dagger(f)=\int_\Si\!d\mu(s) f(s)w(s)\,.
\end{equation}
The (Gramian) kernel associated to the frame is the function $p_W:\Si\times\Si\rightarrow\mathbb C$ given by
\begin{equation}\label{bour}
p_W(s,t):=\<w(t),w(s)\>=\[\phi_W(w(t))\](s)=\overline{\[\phi_W(w(s))\](t)}\,,
\end{equation}
defining a self-adjoint integral operator $P_W=\mathfrak{Int}(p_W)$ in $L^2(\Si)$\,.
One checks easily that $P_W=\phi_{W}\phi_{W}^\dagger$
is the final projection of the isometry $\phi_{W}$, so $P_W\!\[L^2(\Si)\]$ is a closed subspace of $L^2(\Si)$\,.
Since $\phi_{W}^\dagger\phi_{W}=1$\,, one has the inversion formula
\begin{equation}\label{bufal}
u=\int_\Si\!d\mu(t)\[\phi_{W}(u)\](t)\,w(t)\,,
\end{equation}
leading to the reproducing formula $\phi_{W}(u)=P_W\[\phi_{W}(u)\]$\,, i.e.
\begin{equation}\label{taur}
\[\phi_{W}(u)\](s)=\int_\Si\!d\mu(t)\<w(t),w(s)\>\[\phi_{W}(u)\](t)\,.
\end{equation}
Thus $\mathscr P_W(\Si):=P_W\!\[L^2(\Si)\]$ is a reproducing space with reproducing kernel $p_W$; it is composed of continuous functions on $\Si$\,.

To extend the setting above beyond the $L^2$-theory, one can supply an extra space of ``test vectors'', denoted by $\G$\,, assumed to be a Fr\' echet space  continuously and densely embedded in $\H$\,. Applying the Riesz isomorphism we are led to a Gelfand triple $(\G,\H,\G'_\si)$\,. The index $\si$ refers to the fact that on the topological dual $\G'$ we consider usually the weak-$^*$ topology. In certain circumstances one takes $\G$ to be a Banach space and sometimes it can even be fabricated from the frame $W$ and from some extra ingredients, as in Remark \ref{rimarc} below. But very often (think of the Schwartz space) the auxiliar space $\G$ is only Fr\'echet.

We shall suppose that {\it the family $W$ is contained and total in $\G$ and that $\Si\ni s\mapsto w(s)\in\G$ is a weakly continuous function.}
Then we extend $\phi_W$ to $\G'$ by $\[\phi_W(u)\](s):=\<u,w(s)\>$\,, where the r.h.s. denotes now the number obtained by applying $u\in\G'$ to $w(s)\in\G$ and depends continuously on $s$\,. By the totality of the family $W$ in $\G$\,, this extension is injective. In addition, $\Phi_W:\G'\rightarrow C(\Si)$ is continuous if one consider on $\G'$ the weak-$^*$ topology and on $C(\Si)$ the topology of pointwise convergence.

As in \cite{FG,FR,RU} and many other references treating coorbit spaces, one uses $\phi_W(\cdot)$ to pull back subspaces of functions on $\Si$\,. So let $(\M,\p\!\cdot\!\p_\M)$ be a normed space of functions on $\Si$ (more assumptions on $\M$ will be imposed when necessary) and set 
\begin{equation}\label{manole}
{\sf co}_W(\M)\equiv{\sf co}(\M):=\{u\in\G'\!\mid\! \phi_W(u)\in\M\}\,,\quad\ \p\! u\!\p_{{\sf co}(\M)}\,:=\,\p\!\phi_W(u)\!\p_\M\,.
\end{equation}
Recalling the totality of the family $W$ in $\G$\,, one gets a normed space $\({\sf co}(\M),\p\! \cdot\!\p_{{\sf co}(\M)}\right)$ and $\phi_W:{\sf co}(\M)\rightarrow\M$ is an isometry. Without extra assumptions, even when $\M$ is a Banach space, ${\sf co}(\M)$ might not be complete, so we define $\widetilde{{\sf co}}(\M)$ to be the completion. The canonical (isometric) extension of $\phi_W$ to a mapping $:\widetilde{{\sf co}}(\M)\rightarrow\M$ will also be denoted by $\phi_W$\,.
If the norm topology of ${\sf co}(\M)$ happens to be stronger than the weak-$^*$ topology on $\G'$\,, then canonically $\widetilde{{\sf co}}(\M)\hookrightarrow\G'_\si$\,.

\begin{Remark}\label{rimarc}
{\rm Following the approach of \cite{FG,FR,RU}, we indicate now a possible choice for $\G$ adapted to a given frame $W$ in $\H$\,.
Let us consider a continuous {\it admissible weight} $\alpha:\Si\times\Si\rightarrow[1,\infty)$ which is
bounded along the diagonal ($\alpha(s,s)\le C<\infty$ for all $s\in\Si\,$), symmetric ($\alpha(s,t)=\alpha(t,s)$ for all $s,t\in\Si\,$) and
satisfies $\alpha(s,t)\le \alpha(s,r)\alpha(r,t)$ for all $r,s,t\in\Si\,$. It is easy to see that
\begin{equation}\label{koala}
\mathscr A_\alpha:=\{K:\Si\times\Si\rightarrow\mathbb C\ {\rm measurable}\mid\ \p\!K\!\p_{\mathscr A_\alpha}<\infty\}
\end{equation}
is a Banach $^*$-algebra of kernels with the norm
\begin{equation}\label{coropisnita}
\p\!K\!\p_{\mathscr A_\alpha}:=\max\left\{\underset{s\in\Si}{{\rm ess}\sup}\!\int_\Si d\mu(t)|(\alpha K)(s,t)|\,,\,
\underset{t\in\Si}{{\rm ess}\sup}\!\int_\Si d\mu(s)|(\alpha K)(s,t)|\right\}\,.
\end{equation}
Picking some (inessential) point $r\in\Si$ one defines the weight $a\equiv\,a_r:\Si\rightarrow[1,\infty)$ by \ $a(s):=\alpha(s,r)$\,.
We require that the kernel $p_{W}$ given by (\ref{bour}) be an element of $\mathscr A_\alpha$\,; Then it follows that $P_W$ defines a bounded operator in the weighted Lebesgue space $L^1_a(\Si)$\,.
Then set $\G\equiv\G_{a,W}:=\{v\in\H\mid \phi_{W}(v)\in L^1_a(\Si)\}$ with the obvious norm
\begin{equation}\label{urechelnita}
\p\!v\!\p_{\G_{a,W}}:=\,\p\!\phi_{W}(v)\!\p_{L^1_a(\Si)}\,=\!\int_{\Si}\!d\mu(s)\,a(s)\,|\[\phi_{W}(v)\](s)|\,.
\end{equation}
The space $\G_{a,W}$ is a Banach space continuously and densely embedded in $\H$\,. 
In this framework, coorbit spaces were defined and thoroughly investigated in \cite{FR,RU}; if $\M$ is a Banach space then ${\sf co}(\M)$ is automatically complete. The dependence of these coorbit spaces on the frame $W$ is also studied in \cite{FR,RU}; we are going to assume that the frame $W$ is fixed.
}
\end{Remark}

Following \cite{Ma1}, we reconsider a particular case of the formalism described above. This particular case has extra structure allowing to develop a symbolic calculus and to define and study corresponding coorbit spaces of functions or "distributions" on $\Si$\,; we shall only  indicate the facts that are useful for the present paper.

Let $\pi:\Sigma\rightarrow\mathbb B(\H)$ be a map such that for every $u,v\in\H$ one has
\begin{equation}\label{barrymore}
\int_{\Sigma}\!d\mu(s)\,|\<\pi(s)u,v\>|^2=\,\p\! u\!\p^2\,\p\! v\!\p^2\,.
\end{equation}
We set $\pi(s)u=:\pi_u(s)$ and $\pi(s)^*u\equiv\pi^*(s)u=:\pi_u^*(s)$ for every $s\in\Si$ and $u\in\H$\,, getting families of functions $\{\pi_u:\Si\rightarrow\H\mid u\in\H\}$ and $\{\pi_u^*:\Si\rightarrow\H\mid u\in\H\}$\,. One also requires $\pi_u^*$ to be continuous for every $u$\,. 

The map $\Phi^\pi:\H\widehat\otimes\overline\H\rightarrow L^2(\Si)$ uniquely defined by
$$
[\Phi^\pi(u\otimes v)](s)\equiv[\phi^\pi(u,v)](s):=\<\pi(s)u,v\>
$$
is isometric, by (\ref{barrymore}). Although this was not not needed in \cite{Ma1}, we also require $\Phi^\pi$ to be surjective. For every normalized vector $w\in\H$ the map $\phi^\pi_w:\H\rightarrow L^2(\Si)$ given by $\phi^\pi_w(u):=\phi^\pi(u,w)$ is isometric. 
{\it Fixing $w$, it is clear that we are in the above framework with the tight continuous frame defined by}
\begin{equation}\label{mor}
W\equiv W(\pi,w)=\{w(s):=\pi(s)^*w\!\mid\! s\in\Si\}\,.
\end{equation}
Using existing notations one can write $\phi_W=\phi^\pi_w$ and $w(\cdot)=\pi^*_w(\cdot)$\,. After introducing a Fr\'echet space $\G$ continuously embedded in $\H$\,, one can define coorbit spaces ${\sf co}^\pi_w(\M):=\{u\in\G'\!\mid\! \phi^\pi_w(u)\in\M\}$ as it was done above. But we are not going to need them.

To define the symbolic calculus $\Pi$\,, sending functions on $\Si$ into bounded linear operators on $\H$\,, we make use of the rank one operators $\Lambda(u\otimes v)\equiv\lambda_{u,v}:=\<\cdot,v\>u$ indexed by $u,v\in\H$\,. This defines
both a map $\lambda:\H\times\H\rightarrow\mathbb F(\H)$ with values in the ideal of finite-rank operators and a unitary map $\Lambda:\H\widehat\otimes\overline\H\rightarrow\mathbb B_2(\H)$ from the Hilbert tensor product to the Hilbert space of all Hilbert-Schmidt operators on $\H$\,. Consequently $\Pi:=\Lambda\circ\(\Phi^\pi\right)^{-1}:L^2(\Si)\rightarrow \mathbb B_2(\mathcal H)$ will also be unitary; its action is uniquely defined by $\Pi[(\phi(u,v)]=\<\cdot,v\>u$\,.
Also recall \cite[Prop. 2.3]{Ma1} the formula valid in weak sense
\begin{equation}\label{nconstantin}
\Pi(f)=\int_\Si d\mu(s)f(s)\pi^*(s)\,.
\end{equation}

%-------------------------------------------------------------------------------------------------------
\section{Compactness in coorbit spaces associated to continuous frames}\label{onella}
%-------------------------------------------------------------------------------------------------------

Let us fix a tight continuous frame $W:=\{w(s)\!\mid\!s\in\Si\}$ contained and total in a Fr\'echet space $\G$ that is continuously embedded in the Hilbert space $\H$\,. It is assumed that $s\mapsto\<u,w(s)\>$ is continuous for every $u\in\G'$\,. For any normed space $\M$ of functions on  $\Si$ we have defined the coorbit space ${\sf co}_W(\mathcal M)\equiv{\sf co}(\mathcal M)$ with completion $\widetilde{{\sf co}}(\M)$\,, which will be supposed continuously embedded in $\G'_\si$\,. 

One considers a bounded subset $\Omega$ of $\widetilde{{\sf co}}(\mathcal M)$ and investigate when this subset is relatively compact in terms of the canonical mapping $\phi_W\equiv\phi$\,.  We are guided by \cite[Th. 4]{DFG}, but some preparations are needed due to our general setting.
The next abstract Lemma will be applied to $\mathcal Y=\widetilde{{\sf co}}(\M)\hookrightarrow\G'_\si$\,.

\begin{Lemma}\label{aniston}
Let $\mathfrak S(\G)$ a family of seminorms defining the topology of $\,\G$.
Assume that $\mathcal Y$ is a normed space continuously embedded in $\G'_\si$ and let $\Omega\subset\mathcal Y$ be bounded. 
\begin{enumerate}
\item
For every $p\in\mathfrak S(\G)$ there exists a positive constant $D_p$ such that
$$
|\<u,v\>|\le D_p\!\p\! u\!\p_\mathcal Y p(v)\,,\quad\ \forall\,v\in\G,\,u\in\mathcal Y\,.
$$
\item
Seen as a subset of $\,\G'$\,, the set $\Omega$ is equicontinuous and (consequently) relatively compact in the weak-$^*$ topology.
\end{enumerate}
\end{Lemma}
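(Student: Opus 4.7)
The first claim is a uniform bound on the duality pairing, and the natural device for extracting such a bound from separate continuity is the closed graph theorem. The plan is to define $T:\G\rightarrow\mathcal Y'$ by $[T(v)](u):=\<u,v\>$, where $\mathcal Y'$ is the (Banach) topological dual of the normed space $\mathcal Y$. The hypothesis that $\mathcal Y$ is continuously embedded in $\G'_\si$ says precisely that for each $v\in\G$ the linear functional $u\mapsto\<u,v\>$ is norm-continuous on $\mathcal Y$, so $T(v)$ does belong to $\mathcal Y'$.

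Next I would check that the graph of $T$ is closed. Suppose $v_n\rightarrow v$ in $\G$ and $T(v_n)\rightarrow\ell$ in the norm of $\mathcal Y'$; then for each $u\in\mathcal Y$, the pointwise limit gives $\ell(u)=\lim_n\<u,v_n\>$. Since $u$, viewed in $\G'$, is continuous on $\G$, the right-hand side equals $\<u,v\>=[T(v)](u)$, so $\ell=T(v)$. The closed graph theorem applies ($\G$ is Fr\'echet, $\mathcal Y'$ is Banach) and produces a continuous seminorm on $\G$ majorizing $\p T(\cdot)\p_{\mathcal Y'}$; using that $\mathfrak S(\G)$ is a directed defining family, one finds $p\in\mathfrak S(\G)$ and $D_p>0$ with $\p T(v)\p_{\mathcal Y'}\le D_p\,p(v)$, which unwinds to the inequality of item 1.

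For item 2, set $M:=\sup_{u\in\Omega}\p u\p_\mathcal Y<\infty$. Item 1, applied uniformly over $\Omega$, gives $|\<u,v\>|\le D_pM\,p(v)$ for every $v\in\G$ and $u\in\Omega$. This is equicontinuity of $\Omega$ at the origin as a family of linear forms on $\G$, hence equicontinuity everywhere. The Banach--Alaoglu--Bourbaki theorem (equicontinuous subsets of the dual of a locally convex space are relatively compact in the weak-$^*$ topology) then yields the desired relative compactness of $\Omega$ in $\G'_\si$.

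The only real point requiring care is the verification that $T$ has closed graph, and in particular that the norm convergence $T(v_n)\rightarrow\ell$ in $\mathcal Y'$ can be combined with the $\G$-convergence $v_n\rightarrow v$ through the embedding $\mathcal Y\hookrightarrow\G'_\si$; everything else is a routine application of the closed graph and Banach--Alaoglu theorems.
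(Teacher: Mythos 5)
Your proposal is correct. For item 2 it is exactly the paper's argument: the uniform bound of item 1 plus boundedness of $\Omega$ gives equicontinuity on the basic neighborhoods $\{v\in\G \mid p(v)<\delta\}$, and relative weak-$^*$ compactness then follows from the Alaoglu--Bourbaki theorem for equicontinuous sets. For item 1 the paper offers no argument (it is dismissed as ``standard'', being a quantitative restatement of $\mathcal Y\hookrightarrow\G'_\si$), and your closed-graph proof is a legitimate way to supply it: $T:\G\rightarrow\mathcal Y'$, $[T(v)](u)=\<u,v\>$, lands in the Banach space $\mathcal Y'$ by the embedding hypothesis, has closed graph because each $u\in\mathcal Y$ acts continuously on $\G$, and the closed graph theorem (Fr\'echet source, Banach target; it applies just as well if the pairing is antilinear in $v$) yields the seminorm bound. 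An equivalent standard route, closer in spirit to Banach--Steinhaus, is to note that $v\mapsto\sup_{\p u\p_{\mathcal Y}\le 1}|\<u,v\>|$ is an everywhere finite, lower semicontinuous seminorm on the barrelled space $\G$, hence continuous; both arguments buy the same thing. One small honest caveat, which you already flag: what either argument produces is the bound for \emph{some} $p\in\mathfrak S(\G)$ (after using directedness, or a finite maximum of seminorms), not literally for \emph{every} $p$ as the lemma is phrased; this existential form is all that is used later (e.g.\ in the proof of Theorem \ref{damon}), so nothing is lost.
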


\begin{proof}
1 is standard; actually the condition is equivalent to $\mathcal Y\hookrightarrow\G'_\si$\,.

\medskip
\noindent
2. A base of neighborhoods of the origin in $\G$ is
$$
\big\{U(p\,;\delta):=\{v\in\G\!\mid\! p(v)<\delta\}\mid p\in\mathfrak S(\G),\delta>0\big\}\,.
$$
Assume that $\p\!u\!\p_\mathcal Y\,\le\! M$ for every $u\in\Omega$\,. Let $\epsilon>0$\,and $p\in\mathfrak S(\G)$\,.
Using 1, for every $v\in U\!\(p\,;\frac{\epsilon}{MD_p}\right)$ and every $u\in\Omega$ one gets
$$
|\<u,v\>|\le D_p\!\p\!u\!\p_\mathcal Y p(v)\le D_p M p(v)\le\epsilon\,,
$$
and this is equicontinuity. The statement concerning relative compactness follows from the Bourbaki-Alaoglu Theorem \cite{Ja}.
\end{proof}

Let us denote by $\mathcal K(\Si)$ the family of characteristic functions of all compact subsets in $\Si$\,. It can be seen as a subset of the normed algebra $L^\infty_{{\rm c}}(\Si)$ formed of $L^\infty$ functions on $\Si$ which are essentially compactly supported.

We assume that $\M$ {\it is a solid Banach space of functions with absolutely continuous norm} (cf. \cite{BS}; see also \cite{DFG}). We recall that such a space contains all the characteristic functions of sets $M\subset\Si$ with $\mu(M)<\infty$ and given $f,g:\Sigma\to\mathbb C$ two $\mu$-measurable functions, if $|{f(s)}|\leq|{g(s)}|$ almost everywhere and $g \in \mathcal{M}$  then $f\in \mathcal{M}$ and $\p\!f\!\p_\M\,\leq\,\p\!g\!\p_\M$\,. It follows that $\mathcal M$ is a Banach $L_{{\rm c}}^\infty(\Si)$-module. In addition, for all $f,g \in \M$ the following dominated convergence theorem holds: whenever $f_n:\Sigma\to\mathbb C$ are measurable, $|f_n|\leq|g|$ and $f_n \to f$ $\mu$-a.e. then $\p\!f_n-f\!\p_{\M}\,\to0$\,. Any such space is reflexive \cite[Ch. 1, Prop. 3.6 \& Th. 4.1]{BS}.

\begin{Theorem}\label{damon}
Let us assume that $\mathcal M$ is a solid Banach space of functions on $\Si$ with absolutely continuous norm. Then the bounded subset $\,\Omega$ of $\,\widetilde{{\sf co}}(\mathcal M)$ is relatively compact if and only if $\phi(\Omega)$ is $\mathcal K(\Si)$-tight in $\mathcal M$\,.
\end{Theorem}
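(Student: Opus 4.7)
\emph{Plan.}

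Since $\phi\equiv\phi_W$ extends canonically to an isometry $\widetilde{{\sf co}}(\M)\hookrightarrow\M$ with closed image, relative compactness of $\Omega$ in $\widetilde{{\sf co}}(\M)$ and of $F:=\phi(\Omega)$ in $\M$ are equivalent. The plan is to prove both implications for $F\subset\M$: necessity by a standard $\epsilon$-net argument resting on the absolutely continuous norm of $\M$, and sufficiency by extracting weakly-$^*$ convergent subnets from $\Omega\subset\G'_\si$ via Lemma \ref{aniston} and upgrading to $\M$-norm convergence through the combination of tightness and uniform pointwise control on a compact set.

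For the necessary direction, total boundedness of $F$ supplies a finite $\epsilon/2$-net $f_1,\ldots,f_N\in F$. The $\si$-compactness of $\Si$ provides an exhaustion $K_n\uparrow\Si$ by compact sets, and the absolutely continuous norm forces $\p\!\chi_{\Si\setminus K_n}f_i\!\p_\M\to 0$ for each $i$; for $n$ sufficiently large $\p\!\chi_{\Si\setminus K_n}f_i\!\p_\M<\epsilon/2$ holds uniformly in $i$. Solidity and the triangle inequality then extend this tail bound to $\p\!\chi_{\Si\setminus K_n}f\!\p_\M<\epsilon$ for every $f\in F$, which is the desired $\mathcal K(\Si)$-tightness.

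For the sufficient direction, fix a net $(u_\alpha)\subset\Omega$. By Lemma \ref{aniston}(2) the bounded set $\Omega$ is relatively compact in $\G'_\si$, so after passing to a subnet one has $u_\alpha\to u$ weakly-$^*$ in $\G'$. Weak continuity of $s\mapsto w(s)$ into $\G$ yields the pointwise limit $\phi(u_\alpha)(s)=\<u_\alpha,w(s)\>\to\<u,w(s)\>=\phi(u)(s)$ for every $s\in\Si$. Fix $\epsilon>0$ and, by tightness, a compact $K\subset\Si$ with $\p\!\chi_{\Si\setminus K}\phi(u_\alpha)\!\p_\M\le\epsilon$ uniformly in $\alpha$. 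The weakly continuous image $w(K)\subset\G$ is weakly compact, hence (in the Fr\'echet, thus barrelled, space $\G$) bounded for every continuous seminorm; Lemma \ref{aniston}(1) then produces a constant $C_K$ with $|\phi(u_\alpha)(s)|\le C_K$ on $K$ for every $\alpha$, and the pointwise limit inherits the same bound.

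Domination by $C_K\chi_K\in\M$ combined with the absolutely continuous norm of $\M$ yields $\chi_K\phi(u_\alpha)\to\chi_K\phi(u)$ in $\M$. Coupled with tightness, the estimate $\p\!\phi(u_\alpha)-\phi(u_\beta)\!\p_\M\le 2\epsilon+\p\!\chi_K\bigl(\phi(u_\alpha)-\phi(u_\beta)\bigr)\!\p_\M$ is eventually $\le 3\epsilon$, showing that $(\phi(u_\alpha))$ is Cauchy in $\M$. Its norm limit agrees pointwise with $\phi(u)$ and belongs to the closed image of the extended isometry $\phi$; injectivity identifies a unique $\tilde u\in\widetilde{{\sf co}}(\M)$ to which $u_\alpha$ converges in the coorbit norm, completing the argument. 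The crux of the proof is securing the uniform pointwise bound $|\phi(u_\alpha)(s)|\le C_K$ on $K$; once the domination $|\chi_K\phi(u_\alpha)|\le C_K\chi_K\in\M$ is in place, the remaining analytic work is delegated to the dominated convergence theorem that accompanies the absolutely continuous norm of $\M$.
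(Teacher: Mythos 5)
Your strategy coincides with the paper's: the necessity part is the same finite-net-plus-exhaustion argument, and the sufficiency part uses the same ingredients (Lemma \ref{aniston}, pointwise convergence of $\phi(u_\alpha)$ against the frame, a uniform bound on the chosen compact set, and the splitting of $\Si$ into $K$ and $K^c$). The genuine gap is your use of the dominated convergence property of $\M$ along a \emph{net}. The convergence property that comes with an absolutely continuous norm is sequential, and it is false for nets, even for uniformly bounded continuous functions on a compact set: take $\Si=[0,1]$, $\M=L^1$, index by the finite sets $F\subset[0,1]$ ordered by inclusion, and let $g_F$ be continuous with $0\le g_F\le 1$, $g_F=0$ on $F$ and $g_F=1$ outside tiny neighbourhoods of the points of $F$; then $g_F\to 0$ pointwise and $|g_F|\le \chi_{[0,1]}\in\M$, yet $\Vert g_F\Vert_{L^1}\ge \tfrac12$ for every $F$. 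Hence the step ``$\chi_K\phi(u_\alpha)\to\chi_K\phi(u)$ in $\M$'' does not follow from pointwise convergence plus domination for your subnet, and, as you note yourself, this is the crux of the whole sufficiency direction. Nothing rescues it cheaply: equicontinuity of $\{\phi(u_\alpha)\}$ on $K$, which would upgrade pointwise to uniform convergence, is not available because $s\mapsto w(s)$ is only weakly continuous in $\G$.

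The repair is to run the argument with sequences, exactly as the paper does: relative compactness in the normed space $\widetilde{{\sf co}}(\M)$ may be tested on sequences; from a sequence in $\Omega$ one extracts a weak-$^*$ convergent \emph{subsequence} (Lemma \ref{aniston} gives equicontinuity and weak-$^*$ relative compactness, and on an equicontinuous set the weak-$^*$ topology is metrizable when $\G$ is separable --- admittedly a point the paper also passes over silently, and perhaps what pushed you towards nets); then $\phi(u_j)$ is pointwise Cauchy, uniformly bounded on the chosen compact set $L$ by Lemma \ref{aniston}(1) combined with the boundedness of $\{w(s)\mid s\in L\}$ in $\G$, and the sequential dominated convergence of $\M$ legitimately yields $\Vert\chi_L\phi(u_j-u_k)\Vert_\M\to0$, while tightness controls $L^c$ exactly as in your Cauchy estimate. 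With that replacement your proof becomes the paper's.
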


\begin{proof}
We start with the {\it only if part}. 
By relative compactness of $\Omega$\,, for any $\epsilon>0$ there is a finite subset $F$ such that
$$
\min_{v\in F}\p\! u-v\!\p_{\widetilde{{\sf co}}(\M)}\,\le\frac{\epsilon}{2}\,,\quad\ \forall\,u\in \Omega\,.
$$
Recalling that $\Si$ has been assumed $\si$-compact, there is an increasing family $\{L_m\!\mid\! m\in\N\}$ of compact subsets of $\Si$ with $\cup_m L_m=\Si$\,. Since pointwisely $|\chi_{L_m}\phi(v)|\leq|\phi(v)|$ and $\chi_{L_m}\phi(v)\xrightarrow{m\to\infty}\phi(v)$\,, there is a compact set $L\subset\Si$ with complement $L^c$ such that
$$
\max_{v\in F}\p\!\chi_{L^c}\phi(v)\!\p_\M\,\leq\frac\epsilon2\,.
$$
Then, for every $u\in\Omega$\,, using the information above and the fact that $\phi:\widetilde{{\sf co}}(\mathcal M)\rightarrow\M$ is isometric,
\begin{align*}
 \p\!\chi_{L^c}\phi(u)\!\p_\M\,\leq&\,\min_{v\in F}\(\p\!\chi_{L^c}\phi_w(u-v)\!\p_\M+\p\!\chi_{L^c}\phi(v)\!\p_\M\) \\
 \leq&\,\min_{v\in F}\p\!\phi(u-v)\!\p_\M+\frac\epsilon2 \\
 =&\,\min_{v\in F}\p\!u-v\!\p_{\widetilde{{\sf co}}(\M)}+\frac\epsilon2\le\epsilon\,.
\end{align*}

We now prove {\it the converse}. Knowing that $\phi(\Omega)$ is $\mathcal K(\Si)$-tight in $\mathcal M$\,, one needs to show that every sequence $(u_n)_{n\in\N}\subset\Omega$ has a convergent subsequence. 
By Lemma \ref{aniston} the bounded set $\Omega\subset\widetilde{{\sf co}}(\mathcal M)$ is relatively compact in $\G'_\si$\,, so $(u_n)_{n\in\N}$ has a $^*$-weakly convergent subsequence $u_j\rightarrow u_\infty\in\G'$\,:
\begin{equation}\label{galabru}
\<u_j,v\>\rightarrow\<u_\infty,v\>\ \ \ {\rm for\ any}\,\ v\in\G\,.
\end{equation}
Putting $v:=w(s)$ in (\ref{galabru}), we get for every $s\in\Si$
$$
\<u_j,w(s)\>=\[\phi(u_j)\]\!(s)\rightarrow\[\phi(u_\infty)\]\!(s)=\<u_\infty,w(s)\>\,.
$$
Therefore the sequence $\(\phi(u_j)\right)_{j\in\N}$ is pointwise Cauchy. We shall convert this in the norm convergence 
\begin{equation}\label{defunes}
\p\!\phi(u_j)-\phi(u_k)\!\p_\mathcal M\,\rightarrow 0\ \ {\rm when}\ \ j,k\rightarrow\infty\,.
\end{equation}
Then the proof would be finished since $\phi:\widetilde{{\sf co}}(\mathcal M)\rightarrow\mathcal M$ is isometric: $(u_j)_{j\in\N}$ will be Cauchy in $\widetilde{{\sf co}}(\mathcal M)$\,, thus convergent (to $u_\infty$ of course). 

By tightness, pick a compact subset $L\subset\Si$ such that $\p\!\chi_{L^c}\phi(u)\!\p_{\mathcal M}\,\le\epsilon$ for every $u\in\Omega$\,; then we get
\begin{equation}\label{popa}
\p\!\chi_{L^c}\phi(u_j-u_k)\!\p_{\mathcal M}\,\le 2\epsilon\,,\quad\ \forall\,j,k\in\N\,.
\end{equation}
Since $\widetilde{{\sf co}}(\mathcal M)$ is continuously embedded in $\G'_\si$\,, for any seminorm
$p\in\mathfrak S(\G)$ there exist positive constants $D_p,D_p'$ such that for every $s\in\Si$  
$$
\sup_{j,k}|\<u_j-u_k,w(s)\>|\le D_p\sup_{j,k}\p\! u_j-u_k\!\p_{\widetilde{{\sf co}}(\mathcal M)}p[w(s)]\le D'_p\,p[w(s)]\,.
$$
By our assumption on $W$ and by the Uniform Boundedness Principle the family $\{w(s)\!\mid\!s\in L\}$ is bounded in $\G$\,, so we get 
$$
|\[\phi(u_j-u_k)\](s)|\le D'_{p} C_{p,L}\,,\quad \forall\,j,k\in\N,\,s\in L\,.
$$
Anyhow we obtain by the Dominated Convergence Theorem
\begin{equation}\label{morgenstern}
\p\!\chi_L\phi(u_j-u_k)\!\p_{\mathcal M}\,\rightarrow 0\ \ {\rm when}\ \ j,k\rightarrow\infty\,.
\end{equation}
Putting (\ref{morgenstern}) and (\ref{popa}) together one gets (\ref{defunes}) and thus the result.
\end{proof}

\begin{Remark}\label{iurie}
{\rm Let $S$ be an bounded operator from the Banach space $\mathcal X$ to $\widetilde{{\sf co}}(\M)$\,. Then $S$ is a compact operator if and only if for every $\epsilon\!>\!0$ there exists a compact set $L\subset\Si$ such that 
\begin{equation}\label{dar}
\p\!\chi_{L^c}\!\circ\!\phi_W\circ S\!\p_{\mathbb B(\mathcal X,\M)}\,\leq\epsilon\,.
\end{equation}
This follows easily applying Theorem \ref{damon} to the set $\Omega:=S\!\(\mathcal X_{\[1\]}\right)$ and using the explicit form of the operator norm\,. Here $\mathcal X_{[1]}$ denotes the closed unit ball in the Banach space $\mathcal X$\,.}
\end{Remark}

%-------------------------------------------------------------------------------------------------------
\section{Compactness in Hilbert spaces}\label{poponel}
%-------------------------------------------------------------------------------------------------------

To have an ampler setting, we turn now to the particular case described in the last part of Section \ref{omorniella}. Thus a family $\{\pi(s)\!\mid\! s\in\Si\}$ of bounded operators in the Hilbert space $\H$ is given. We recall that $s\mapsto \pi(s)^*\in\mathbb B(\H)$ is strongly continuous and that (\ref{barrymore}) is verified
for every $u,v\in\H$\,. Then $\phi^\pi_w:\H\rightarrow L^2(\Si)$ defined by $\[\phi^\pi_w(u)\](s):=\<\pi(s)u,w\>$ is well-defined and isometric for every normalized vector $w$ of the Hilbert space $\H$.

\begin{Theorem}\label{rebengiuk}
Let $\Omega$ be a bounded subset of $\,\H\,$. Consider the following assertions:
\begin{enumerate}
\item
$\Omega$ is relatively compact.
\item
For every $\,w\in\H$ the family $\phi^\pi_w(\Omega)$ is $\mathcal K(\Si)$-tight in $L^2(\Si)$\,.
\item
There exists $\,w_0\in\H$ such that the family $\phi^\pi_{w_0}(\Omega)$ is $\mathcal K(\Si)$-tight in $L^2(\Si)$\,.
\item
For each $\epsilon>0$ there exists $f\in C_{\rm{c}}(\Si)$ with $\,\underset{u\in\Omega}{\sup}\!\p\!\Pi(f)u-u\!\p\,\le\epsilon$ 
(i.e. $\Omega$ is $\Pi\[C_{{\rm c}}(\Si)\]$-tight)\,.
\item 
One has $\,\underset{s\rightarrow s_0}{\lim}\,\underset{u\in\Omega}{\sup}\p\!\pi(s)^*u-\pi(s_0)^*u\!\p\,=0\,$ for every $s_0\in\Si$\,.
\item
For every $\epsilon>0$ and $s_0\in\Si$ there exists $g\in C_{\rm{c}}(\Si)$ such that $\,\underset{u\in\Omega}{\sup}\!\p\!\Pi(g)u-\pi(s_0)^*u\!\p\,\le\epsilon$\,.
\end{enumerate}
Then 1,\,2,\,3 and 4 are equivalent, they imply 5, which in its turn implies 6\,. 
Thus, if we assume that $\pi(s_1)^*=1\,$ for some $s_1\in\Si$\,, then all the six assertions are equivalent.
\end{Theorem}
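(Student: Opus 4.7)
My plan is to run the cycle
$$
1\Leftrightarrow 2\Leftrightarrow 3\Leftrightarrow 4\,\Rightarrow\,5\,\Rightarrow\,6
$$
and then close it under the extra hypothesis $\pi(s_1)^*=1$ via $6\Rightarrow 4$.

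For $1\Leftrightarrow 2$ (and hence $1\Leftrightarrow 3$) I would apply Theorem \ref{damon} to the tight continuous frame $W(\pi,w)=\{\pi(s)^*w\mid s\in\Si\}$ of (\ref{mor}) attached to any normalized window $w\in\H$. Taking $\G=\H$ itself (so $\G'=\H$) and $\M=L^2(\Si)$, which is solid with absolutely continuous norm, one has $\widetilde{{\sf co}}_W(L^2(\Si))=\H$ because $\phi^\pi_w:\H\rightarrow L^2(\Si)$ is already isometric. Theorem \ref{damon} then yields precisely that $\Omega$ is relatively compact iff $\phi^\pi_w(\Omega)$ is $\mathcal K(\Si)$-tight, and $2\Leftrightarrow 3$ is trivial (rescale any nonzero window). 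For $1\Leftrightarrow 4$ I would use that $\Pi:L^2(\Si)\rightarrow\mathbb B_2(\H)$ is unitary and $C_{\rm c}(\Si)$ is dense in $L^2(\Si)$, so $\Pi\[C_{\rm c}(\Si)\]$ is dense in the Hilbert-Schmidt ideal and, a fortiori, in $\mathbb K(\H)$ in operator norm. For $1\Rightarrow 4$, relative compactness supplies a finite-rank orthogonal projection $P$ with $\sup_\Omega\p\! Pu-u\!\p\,\le\epsilon$, which I then approximate in Hilbert-Schmidt norm by some $\Pi(f)$, $f\in C_{\rm c}(\Si)$. Conversely, for $4\Rightarrow 1$, compactness of $\Pi(f)$ makes $\Pi(f)(\Omega)$ totally bounded; a finite $\epsilon$-net there, combined with $\sup_\Omega\p\!\Pi(f)u-u\!\p\,\le\epsilon$, furnishes a finite $2\epsilon$-net for $\Omega$.

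For $4\Rightarrow 5$, fixing $s_0$ and $\epsilon>0$, pick $f\in C_{\rm c}(\Si)$ with $\sup_\Omega\p\!\Pi(f)u-u\!\p\,\le\epsilon$ and split
$$
\pi(s)^*u-\pi(s_0)^*u\,=\,\pi(s)^*\[u-\Pi(f)u\]+\[\pi(s)^*-\pi(s_0)^*\]\Pi(f)u+\pi(s_0)^*\[\Pi(f)u-u\]\,.
$$
Strong continuity of $\pi^*$ together with Banach-Steinhaus gives local uniform boundedness of $\p\!\pi(s)^*\!\p$ near $s_0$, so the outer terms are bounded uniformly on $\Omega$ by a fixed multiple of $\epsilon$. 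For the middle term, the standard principle that a uniformly bounded strongly convergent family of operators converges uniformly on relatively compact sets, applied to the precompact set $\overline{\Pi(f)(\H_{[M]})}$ (where $M$ bounds $\Omega$), yields $\sup_\Omega\p\!\[\pi(s)^*-\pi(s_0)^*\]\Pi(f)u\!\p\,\rightarrow 0$ as $s\rightarrow s_0$. Letting $s\rightarrow s_0$ then $\epsilon\rightarrow 0$ gives 5. For $5\Rightarrow 6$, pick a nonnegative $g\in C_{\rm c}(\Si)$ supported in a neighborhood $V$ of $s_0$ with $\int g\,d\mu=1$; then
$$
\Pi(g)u-\pi(s_0)^*u\,=\int_V g(s)\[\pi(s)^*u-\pi(s_0)^*u\]d\mu(s)\,,
$$
so $\sup_\Omega\p\!\Pi(g)u-\pi(s_0)^*u\!\p\,\le\,\sup_{s\in V,\,u\in\Omega}\p\!\pi(s)^*u-\pi(s_0)^*u\!\p$, which by 5 is $\le\epsilon$ once $V$ is small enough. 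Finally, assuming $\pi(s_1)^*=1$, substituting $s_0=s_1$ in 6 recovers 4 verbatim, closing the equivalences.

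I expect the main obstacle to be the norm continuity $\pi(s)^*\Pi(f)\rightarrow\pi(s_0)^*\Pi(f)$ used in $4\Rightarrow 5$, since it requires carefully coordinating strong continuity of $\pi^*$, Banach-Steinhaus for local uniform operator bounds, and the passage from pointwise to uniform convergence on the precompact image $\Pi(f)(\H_{[M]})$. A secondary technicality in $5\Rightarrow 6$ is the existence of a normalized bump function in $C_{\rm c}(\Si)$ supported arbitrarily close to $s_0$, which requires $s_0$ to lie in the support of $\mu$; this is natural for the points of interest, in particular for $s_0=s_1$ in view of the frame relation (\ref{barrymore}).
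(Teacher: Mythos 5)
Your proposal is correct and follows essentially the same route as the paper: Theorem \ref{damon} with $\M=L^2(\Si)$ and the frame $W(\pi,w)$ for $1\Leftrightarrow 2\Leftrightarrow 3$, the unitarity of $\Pi:L^2(\Si)\rightarrow\mathbb B_2(\H)$ plus density of $C_{\rm c}(\Si)$ for $1\Leftrightarrow 4$, a splitting through the compact operator $\Pi(f)$ (yours in three terms, the paper's in two via $\Pi(f)^\perp$) combined with Banach--Steinhaus for $4\Rightarrow 5$, a normalized bump function concentrated near $s_0$ for $5\Rightarrow 6$, and the substitution $s_0=s_1$ to close the loop. The $\supp\mu$ caveat you flag in $5\Rightarrow 6$ is equally implicit in the paper's own argument, so it does not distinguish your proof from theirs.
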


\begin{proof}
The equivalence of the points 1,\,2 and 3 follows from Theorem \ref{damon}\,, since in this case $\H={\sf co}\!\[L^2(\Si)\]$ and $\M:=L^2(\Si)$ is indeed a solid Banach space of functions with absolutely continuous norm.

\medskip
\noindent
$1\Rightarrow 4$.
Let $\Omega\subset\H$ be relatively compact and, for some $\epsilon>0$\,, let $F$ be a finite subset such that for each $u\in\Omega$ there exists $v_u\in F$ with $\p\! u-v_u\!\p\,\le\epsilon/4$\,. The subspace $\F$ generated by $F$ will be finite-dimensional and thus the corresponding projection $P$ will be a finite-rank operator satisfying $P v=v$ for every $v\in F\,$. Then for every $u\in\Omega$
\begin{equation}\label{bourvi}
\begin{aligned}
\p\!Pu-u\!\p\,&\le\,\p\!Pu-Pv_u\!\p+\p\!Pv_u-v_u\!\p+\p\!v_u-u\!\p\,\le 2\p\!u-v_u\!\p\,\le\epsilon/2\,.
\end{aligned}
\end{equation}
Notice that $\{\,\Pi(f)\!\mid\!f\in C_{\rm{c}}(\Si)\,\}$ is a dense set of compact operators. To see this, use the fact that $\Pi:L^2(\Si)\rightarrow\mathbb B_2(\H)$ is an isometric isomorphism and that $C_{{\rm c}}(\Si)$ is dense in $L^2(\Si)$\,; the topology of $\mathbb B_2(\H)$ is stronger than that of $\mathbb B(\H)$, while $\mathbb K(\Si)$ is the closure of $\mathbb B_2(\H)$ in the operator norm.
Let now $M:=\sup_{u\in\Omega}\!\p\!u\!\p$\,; by density there is some $f\in C_{{\rm c}}(\Si)$ with $\p\!P-\Pi(f)\!\p_{\mathbb B(\mathcal H)}\,\le\epsilon/2M$\,. From this and from (\ref{bourvi}) the conclusion follows immediately.

\medskip
\noindent
$4\Rightarrow 1$.
To prove the converse, for $\epsilon>0$ choose $f\in C_{\rm{c}}(\Si)$ such that $\,\sup_{u\in\Omega}\!\p\!\Pi(f)u-u\!\p\,\le\epsilon/2$\,.
Since $\Pi(f)$ is a compact operator and $\Omega$ is bounded, the range $\Pi(f)\Omega$ is relatively compact, so there is a finite set $G$ such that for each $u\in\Omega$ there is an element $v^u\in G$ with $\p\! \Pi(f)u-v^u\!\p\,\le\epsilon/2$\,. Then for $u\in\Omega$ one has
$$
\p\! u-v^u\!\p\,\le\,\p\!u-\Pi(f)u\!\p+\p\!\Pi(f)u-v^u\!\p\,\le\,\epsilon/2+\epsilon/2=\epsilon\,,
$$
so the set $\Omega$ is totally bounded.

\medskip
$4\Rightarrow 5$. Setting $S^\perp:=1-S$\,, we compute for $s_0\in\Si\,,\,u\in\Omega\,,\,f\in C_{\rm{c}}(\Si)$ and $s$ belonging to a neighborhood $V$ of $s_0$\,:
$$
\begin{aligned}
\p\!\pi(s)^*u-\pi(s_0)^*u\!\p\,&\le\,\p\![\pi(s)^*-\pi(s_0)^*]\,\Pi(f)u\!\p+\p\![\pi(s)^*-\pi(s_0)^*]\,\Pi(f)^\perp u\!\p\\
&\le\,\sup_{u\in\Omega}\p\! u\!\p\,\p\![\pi(s)^*-\pi(s_0)^*]\,\Pi(f)\!\p_{\mathbb B(\H)}+
\,2\sup_{t\in V}\p\!\pi(t)^*\!\p_{\mathbb B(\H)}\sup_{u\in\Omega}\p\!\Pi(f)^\perp u\!\p.
\end{aligned}
$$
The first term is small for $s$ belonging to a suitable neighborhood $V$, because $\Omega$ is bounded, $\pi^*$ is strongly continuous and this is improved to norm continuity by multiplication with the compact operator $\,\Pi(f)$\,. The second term is also small for some suitable $f$\,, because of the assumption $4$ and since $\p\!\pi^*(\cdot)\!\p_{\mathbb B(\H)}$ is bounded on the compact set $\overline V$ (use the Uniform Boundedness Principle and the strong continuity of $\pi^*$)\,. 

\medskip
$5\Rightarrow 6$. Compute for any positive $g\in C_{\rm{c}}(\Si)$ with $\int_\Si\!gd\mu=1$
$$
\begin{aligned}
\p\!\Pi(g)u-\pi(s_0)^*u\!\p\,=\Big\Vert\int_\Si\!d\mu(s)g(s)[\pi(s)^*u-\pi(s_0)^*u]\,
\Big\Vert\le\int_\Si\!d\mu(s)g(s)\!\p\![\pi(s)^*-\pi(s_0)^*]\,u\!\p
\end{aligned}
$$
and then use $5$ and require $g$ to have support inside the convenient neighborhood of the point $s_0$\,.
\end{proof}

\begin{Remark}\label{siedentop}
{\rm Among the possible applications of Theorem \ref{rebengiuk}, let us mention one concerning the connection between spectral and dynamical properties of self-adjoint operators. So let $H$ be a (maybe unbounded) self-adjoint operator in the Hilbert space $\H$\,. We denote by $\{e^{itH}\!\mid\! t\in\mathbb R\}$ the evolution group generated by $H$ (a $1$-parameter strongly continuous group of unitary operators) and for each $u\in\H$ we set $[u]^H$ for the quasiorbit of $u$ under this group, i.e. $[u]^H$ is the norm-closure of the orbit $\{e^{itH}u\!\mid\! t\in\mathbb R\}$\,. By $\H_{{\rm p}}(H)$ we denote the closed subspace of $\H$ generated by the eigenvectors of $H$\,. It is known (see \cite{GI} for instance) that a vector $u$ belongs to $\H_{{\rm p}}(H)$ if and only if $[u]^H$ is a compact subset of $\H$\,. Applying Theorem \ref{rebengiuk} to the bounded set $\Omega:=[u]^H$ one gets various characterizations for the vector to belong to the spectral subspace $\H_{{\rm p}}(H)$ in terms of one of the objects $\pi,\Pi$ or $\phi^\pi_w$\,. This is valuable especially when $H$ is the quantum Hamiltonian of some physical system described in $\H$ and the family $\pi(\cdot)$
also has some physical meaning.
}
\end{Remark}

\medskip
{\it For simplicity, we are always going to assume that $\,\pi(s_1)^*=1\,$ for some $s_1\in\Si$\,.} Below $\H_{[1]}$ denotes the closed unit ball of the Hilbert space $\H$\,.

\begin{Corollary}\label{moraru}
Let $\mathcal X$ be a Banach space and $S\in\mathbb B(\mathcal X,\H)$\,. The next assertions are equivalent:
\begin{enumerate}
\item
$S$ is a compact operator.
\item
The set $\phi^\pi_w(S\mathcal X_{[1]})$ is $\mathcal K(\Si)$-tight in $L^2(\Si)$ for some (every) $w\in\H$\,.

Writting $M_{\chi_L}^\perp$ for the operator of multiplication by the function $1-\chi_L$ in $L^2(\Si)$\,, this can be restated: for every $\epsilon>0$ there is a compact subset $L$ of $\,\Si$ such that $\,\p\!M_{\chi_L}^\perp\circ\phi^\pi_w\circ S\!\p_{\mathbb B(\mathcal X,L^2)}\,\le\epsilon$\,.
\item
For every $\,\epsilon>0$ there is some $f\in C_{{\rm c}}(\Si)$ such that $\,\p\![\Pi(f)-1]S\!\p_{\mathbb B(\mathcal X,\H)}\,\le\epsilon$\,.
\item
The map $\,\Si\ni s\mapsto\pi(s)^*S\in\mathbb B(\mathcal X,\H)$ is norm-continuous.
\end{enumerate}
\end{Corollary}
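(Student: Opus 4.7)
The plan is to deduce the Corollary as a direct specialization of Theorem \ref{rebengiuk} to the bounded set $\Omega := S(\mathcal X_{[1]}) \subset \H$. Since $S$ is a bounded linear operator, $\Omega$ is bounded, and the defining property of compactness for linear operators is precisely that $\Omega$ be relatively compact in $\H$. Thus condition $1$ of the Corollary is identical to condition $1$ of Theorem \ref{rebengiuk} applied to this particular $\Omega$. Because the standing assumption $\pi(s_1)^* = 1$ for some $s_1 \in \Si$ is in force, all six conditions of that Theorem are equivalent, and the task reduces to translating four of them into statements about operator norms.

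The translation in each case relies on the elementary identity $\sup_{x \in \mathcal X_{[1]}}\! \p\! T x\!\p_{\mathcal Y} = \p\! T\!\p_{\mathbb B(\mathcal X,\mathcal Y)}$ for $T \in \mathbb B(\mathcal X,\mathcal Y)$. First, I would observe that since $\phi^\pi_w$ is linear,
\[
\sup_{u \in \Omega}\p\!\chi_{L^c}\phi^\pi_w(u)\!\p_{L^2(\Si)} \,=\, \sup_{x \in \mathcal X_{[1]}} \p\!(M_{\chi_L}^\perp \circ \phi^\pi_w \circ S)(x)\!\p_{L^2(\Si)} \,=\, \p\! M_{\chi_L}^\perp \circ \phi^\pi_w \circ S\!\p_{\mathbb B(\mathcal X,L^2)},
\]
so $\mathcal K(\Si)$-tightness of $\phi^\pi_w(\Omega)$ in $L^2(\Si)$ (conditions 2 and 3 of Theorem \ref{rebengiuk}) is exactly condition $2$ of the Corollary. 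Second, $\Pi[C_{\rm c}(\Si)]$-tightness of $\Omega$ (condition 4 of the Theorem) reads
\[
\sup_{u \in \Omega}\p\!\Pi(f)u - u\!\p \,=\, \p\![\Pi(f) - 1]S\!\p_{\mathbb B(\mathcal X,\H)},
\]
which is exactly condition $3$ of the Corollary. Third, condition $5$ of the Theorem becomes
\[
\lim_{s\to s_0}\sup_{u\in\Omega}\!\p\!\pi(s)^*u-\pi(s_0)^*u\!\p \,=\, \lim_{s\to s_0}\p\!\pi(s)^*S-\pi(s_0)^*S\!\p_{\mathbb B(\mathcal X,\H)} = 0,
\]
which is precisely norm-continuity of $s \mapsto \pi(s)^*S$ at $s_0$, i.e. condition $4$ of the Corollary.

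Combining these identifications, the equivalences $1 \Leftrightarrow 2 \Leftrightarrow 3 \Leftrightarrow 4$ of the Corollary follow immediately from the equivalences $1 \Leftrightarrow 2 \Leftrightarrow 3 \Leftrightarrow 4 \Leftrightarrow 5$ of Theorem \ref{rebengiuk}, valid under the standing assumption $\pi(s_1)^* = 1$. There is no serious obstacle here; the only point meriting a brief verification is the commutation of the supremum with the operator norm in each of the three translations above, which is routine because $\phi^\pi_w$, $\Pi(f)$, $\Pi(f) - 1$ and $\pi(s)^* - \pi(s_0)^*$ are all linear and $\Omega$ is the image of the unit ball. No extra hypothesis beyond those already assumed in Section \ref{poponel} is required.
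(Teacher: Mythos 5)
Your proposal is correct and matches the paper's own proof, which likewise deduces the Corollary by applying Theorem \ref{rebengiuk} to $\Omega:=S\mathcal X_{[1]}$ and using the identity $\p\!T\!\p_{\mathbb B(\mathcal X,\mathcal H)}=\sup_{x\in\mathcal X_{[1]}}\p\!Tx\!\p$ to rewrite each tightness/continuity condition as an operator-norm statement; you have simply written out the translations the paper leaves implicit.
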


\begin{proof}
This is a simple consequence of Theorem \ref{rebengiuk}, since $S$ is a compact operator if and only if $\Omega:=S\mathcal X_{[1]}$ is relatively compact in $\H$\,; also use $\p\!T\!\p_{\mathbb B(\mathcal X,\mathcal H)}=\underset{x\in\mathcal X_{[1]}}{\sup}\!\p\!Tx\!\p$\,.
\end{proof}

\begin{Remark}\label{olivier}
{\rm
Let us have a look at the implication $4\,\Rightarrow\,1$\,. We could say that a strongly continuous function $\rho:\Gamma\rightarrow\mathbb B(\H)$ ($\Gamma$ is a topological space)
{\it characterizes compactness} if for any $S\in\mathbb B(\mathcal X,\H)$ (and for any Banach space $\mathcal X$) the fact that the function $\rho_S(\cdot):=\rho(\cdot)S$ is
norm-continuous implies $S\in\mathbb K(\H)$\,. In particular, our function $\pi^*$ does this. Many other don't; think for instance that $\rho$ is already norm-continuous or that all the ranges $\rho(\gamma)\H$ are orthogonal on some fixed proper infinitely dimensional subspace.
}
\end{Remark}

\begin{Remark}\label{rafaila}
{\rm It is easy to interpret the point $3$ as a tightness condition, since $\mathbb B(\mathcal X,\H)$ is a left Banach module over $\mathbb B(\H)$ under operator multiplication.}
\end{Remark}

%-------------------------------------------------------------------------------------------------------
\section{Compactness in spaces of compact operators}\label{onecu}
%-------------------------------------------------------------------------------------------------------

Our next problem is to describe relative compactness of subsets $\mathscr K$ of the Banach space $\mathbb K(\mathcal X,\widetilde{{\sf co}}(\M))$ of all compact operators from an arbitrary Banach space $\mathcal X$ to (the completion of) a coorbit space. Even the case $\widetilde{{\sf co}}(\M)=\H$ is interesting.
Of course, the key fact is that we have now convenient descriptions of relative compactness in the final Banach space $\widetilde{{\sf co}}(\M)$\,. But this can be used efficiently only taking into account some rather deep abstract facts.

One could hope that the good conditions would be uniform versions of (\ref{dar}) (the same $L$ for a given $\epsilon$ and for all the elements $S$ of $\mathscr K$)
or of the conditions 2,3 or 4 in Corollary \ref{moraru}\,. Clearly such a guess is connected to the notion of {\it collective compactness}. If $\mathcal X$ and $\mathcal Y$ are Banach spaces, a subset $\mathscr L$ of $\mathbb K(\mathcal X,\mathcal Y)$ is called collectively compact if $\mathscr L\mathcal X_{[1]}:=\cup_{S\in\mathscr L}S\mathcal X_{[1]}$ is relatively compact in $\mathcal Y$\,. It can be shown that if $\mathscr L$ is relatively compact in $\mathbb K(\mathcal X,\mathcal Y)$ then it is also collectively compact. To see that the converse is false, take for simplicity $\mathcal X=\mathcal Y=\H$ a Hilbert space. It is easy to check that $\mathscr L$ is relatively compact if and only if $\mathscr L^*:=\{S^*\!\mid\! S\in\mathscr L\}$ is relatively compact. But such a stability under taking the family of adjoints fails dramatically in the case of collective compactness. Let $\{e_j\mid j\in\N\}$ be an orthonormal base in $\H$ and set $\mathscr L:=\{\<\cdot,e_j\>e_1\!\mid\! j\in\N\}$\,. Then $\mathscr L$ is collectively compact while $\mathscr L^*:=\{\<\cdot,e_1\>e_j\!\mid\! j\in\N\}$ is not!

Let us return to the Banach case and denote by $\mathcal X'$ and $\mathcal Y'$\,, respectively, the topological duals of the spaces $\mathcal X$ and $\mathcal Y$\,.
It has been considered a success proving finally \cite{Pa,An} that {\it $\mathscr L\subset \mathbb K(\mathcal X,\mathcal Y)$ is relatively compact if and only if both $\mathscr L$ and $\mathscr L'\subset\mathbb K(\mathcal Y',\mathcal X')$ are collectively compact}. By definition, $\mathscr L'$ is composed of the transposed operators $S':\mathcal Y'\rightarrow\mathcal X'$ with $S\in\mathscr L$\,. 

This result is not yet handy for our problem (in which $\mathcal Y=\widetilde{{\sf co}}(\M)$)\,, because in general we do not know anything about compactness of the subsets of $\mathcal X'$\,. On the other hand, much later  \cite{SPD} it has been shown that $\mathscr L'\subset\mathbb K(\mathcal Y',\mathcal X')$ is collectively compact if and only if $\mathscr L$ is {\it equicompact}, in the sense that there is a sequence $\mathcal X'\ni x'_n\rightarrow 0$ such that $\sup_{S\in\mathscr L}\!\!\p\!Sx\!\p_{\mathcal Y}\,\le\sup_n|\<x_n',x\>|$ for every $x\in\mathcal X$\,. 

\begin{Remark}\label{buster}
{\rm In \cite{My} it is also shown that if $\mathcal X$ does not contain an isomorphic copy of $l^1$ then $\mathscr L$ is relatively compact if and only if it is collectively compact and {\it uniformly weak-norm continuous} (if $x_n\rightarrow 0$ weakly then $\sup_{S\in\mathscr L}\!\p\!Sx_n\!\p_\mathcal Y\rightarrow 0$)\,. This also follows from \cite{SPD}, while \cite{GF} contains a related result.
}
\end{Remark}

Using all these, the notions introduces in section \ref{omorniella}  and Theorem \ref{damon} one gets easily

\begin{Corollary}\label{cutare}
Let us assume that $\mathcal M$ is a solid Banach space of functions on $\Si$ with absolutely continuous norm, let $\mathcal X$ be a Banach space and $\mathscr K$ a subset of $\,\mathbb B\[\mathcal X,\widetilde{{\sf co}}(\M)\]$\,. Then $\mathscr K$ is a compact family of compact operators if and only if 
\begin{enumerate}
\item
For every $\epsilon\!>\!0$ there exist a compact set $L\subset\Si$ such that
$\underset{S\in\mathscr K}{\sup}\!\p\!\chi_{L^c}\!\circ\!\phi_W\circ S\!\p_{\mathbb B(\mathcal X,\M)}\,\leq\epsilon$ 

and          
\item
There is a sequence $\mathcal X'\ni x_n'\rightarrow 0$ such that $\underset{S\in\mathscr K}{\sup}\!\p\! \phi_W(Sx)\!\p_{\mathcal M}\,\le\sup_n|\<x_n',x\>|\,$ for every $x\in\mathcal X$\,.                                                                                                                                               
\end{enumerate}
If $\mathcal X$ does not contain an isomorphic copy of $\,l^1\,$, then 2 can be replaced by

\medskip
2'. If $x_n\rightarrow 0$ weakly then $\underset{S\in\mathscr K}{\sup}\!\p\!\phi_W(Sx_n)\!\p_{\M}\,\rightarrow 0$\,.
\end{Corollary}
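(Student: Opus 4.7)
The proof is essentially an assembly of three ingredients already displayed in the preceding discussion: the Palmer--Anselone characterization of relative compactness in $\mathbb K(\mathcal X,\mathcal Y)$ via collective compactness of both $\mathscr K$ and its transposed family $\mathscr K'$; the reformulation in \cite{SPD} of collective compactness of $\mathscr K'$ as equicompactness of $\mathscr K$; and Theorem~\ref{damon} (in the form of Remark~\ref{iurie}) which characterizes relative compactness of subsets of $\widetilde{{\sf co}}(\M)$ by $\mathcal K(\Si)$-tightness through $\phi_W$.

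The plan is to run these three reductions in order. First I would note that any relatively compact family in $\mathbb B\[\mathcal X,\widetilde{{\sf co}}(\M)\]$ automatically consists of compact operators, so ``compact family of compact operators'' is simply relative compactness in $\mathbb B\[\mathcal X,\widetilde{{\sf co}}(\M)\]$ (equivalently in $\mathbb K\[\mathcal X,\widetilde{{\sf co}}(\M)\]$). Applying Palmer--Anselone with $\mathcal Y:=\widetilde{{\sf co}}(\M)$, relative compactness of $\mathscr K$ is equivalent to the conjunction: $\mathscr K$ is collectively compact, and $\mathscr K'\subset\mathbb K\!\(\mathcal Y',\mathcal X'\right)$ is collectively compact. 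Then I would invoke \cite{SPD} to replace the second condition by: there exists a sequence $x_n'\to 0$ in $\mathcal X'$ with
$$
\sup_{S\in\mathscr K}\p\!Sx\!\p_{\widetilde{{\sf co}}(\M)}\,\le\,\sup_{n}|\<x_n',x\>|\quad\text{for every }x\in\mathcal X.
$$
Since $\phi_W:\widetilde{{\sf co}}(\M)\to\M$ is an isometry, the left-hand side equals $\sup_{S\in\mathscr K}\p\!\phi_W(Sx)\!\p_\M$\,, which is precisely condition 2.

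For the first condition (collective compactness of $\mathscr K$), I would apply Theorem~\ref{damon} to the bounded set $\Omega:=\mathscr K\mathcal X_{[1]}=\bigcup_{S\in\mathscr K}S\mathcal X_{[1]}$\,; its relative compactness in $\widetilde{{\sf co}}(\M)$ is equivalent to $\phi_W(\Omega)$ being $\mathcal K(\Si)$-tight in $\M$, that is, for every $\epsilon>0$ there exists a compact $L\subset\Si$ with $\p\!\chi_{L^c}\phi_W(Sx)\!\p_\M\le\epsilon$ for all $S\in\mathscr K$ and $x\in\mathcal X_{[1]}$. Taking the supremum over the unit ball exactly produces
$$
\sup_{S\in\mathscr K}\p\!\chi_{L^c}\!\circ\!\phi_W\!\circ S\!\p_{\mathbb B(\mathcal X,\M)}\le\epsilon,
$$
which is condition 1. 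Combining the two reductions gives the stated equivalence.

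For the final assertion, when $\mathcal X$ contains no isomorphic copy of $l^1$, one substitutes Myers' theorem (recalled in Remark~\ref{buster}): under this hypothesis, equicompactness of $\mathscr K$ is equivalent to uniform weak-to-norm continuity of the family on $\mathcal X$, which via the isometry $\phi_W$ becomes condition~2'. The only slightly delicate point is really the bookkeeping in the first reduction: one must verify that the supremum of $\p\!\chi_{L^c}\phi_W(Sx)\!\p_\M$ over $x\in\mathcal X_{[1]}$ and $S\in\mathscr K$ can be performed in either order and equals the operator-norm expression in condition~1, but this is immediate from the definition of operator norm and from the fact that multiplication by $\chi_{L^c}$ is a contraction on $\M$ by solidity. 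Everything else is routine once the three external results are imported.
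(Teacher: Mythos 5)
Your proof is correct and is essentially the paper's own argument: the paper states Corollary \ref{cutare} without a written proof, asserting that it follows from the Palmer--Anselone criterion \cite{Pa,An}, the equicompactness reformulation of \cite{SPD} (and Remark \ref{buster} for condition 2'), and Theorem \ref{damon}; your two reductions, combined with the isometry $\phi_W:\widetilde{{\sf co}}(\M)\rightarrow\M$ and the identification of the double supremum with the operator norm in $\mathbb B(\mathcal X,\M)$, are exactly that assembly. Two small repairs are needed in the write-up. First, the aside that any norm-relatively compact subset of $\mathbb B\[\mathcal X,\widetilde{{\sf co}}(\M)\]$ automatically consists of compact operators is false (a singleton containing a non-compact operator is already a compact subset of $\mathbb B\[\mathcal X,\widetilde{{\sf co}}(\M)\]$), so ``compact family of compact operators'' must be read as a relatively compact subset of $\mathbb K\[\mathcal X,\widetilde{{\sf co}}(\M)\]$; this does not damage your argument, since in the ``if'' direction compactness of each $S\in\mathscr K$ follows from condition 1 alone (Remark \ref{iurie}, or from the collective compactness you derive via Theorem \ref{damon}), while in the ``only if'' direction it is part of the hypothesis, which is also what the ``if'' part of the Palmer--Anselone theorem requires before it can be invoked. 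Second, when applying Theorem \ref{damon} to $\Omega:=\mathscr K\mathcal X_{[1]}$ in the ``if'' direction you should record that $\Omega$ is bounded in $\widetilde{{\sf co}}(\M)$, since Theorem \ref{damon} assumes this; boundedness follows from condition 2 because $\sup_n\p\!x'_n\!\p_{\mathcal X'}<\infty$, and from condition 2' by a short contradiction argument with vectors $x_k/k$.
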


We refer now to the situation explored in section \ref{poponel}, recalling the objects $(\pi,\phi^\pi_w,\Pi)$\,; for simplicity we only consider the case $\mathcal X=\H$\,. There are several ways to characterize relative compactness of subsets of $\mathbb K(\H)$\,, relying on Theorem \ref{rebengiuk} and the discussion preceding Corollary \ref{cutare}. We present the one involving collective compactness of the family and of the family of adjoints and leave to the interested reader the easy task to state others, maybe also for the case of a general Banach space $\mathcal X$\,. The setting is that of section \ref{poponel}; it is also assumed that $\pi(s_1)^*=1$ for some $s_1\in\Si$\,. 

\begin{Corollary}\label{bibanu}
Let $\mathscr K$ be a family of bounded operators in $\H$\,. The following assertions are equivalent:
\begin{enumerate}
\item
$\mathscr K$ is a relatively compact family of compact operators.
\item
For some (any) $w\in\H$\ the family $\{\phi_w(S\H_{[1]})\!\mid\! S\in\mathscr K\cup\mathscr K^*\}$ is uniformly tight in $L^2(\Si)$\,. 

This condition means that for every strictly positive $\epsilon$ there exists a compact subset $L$ of $\,\Si$ such that
$\,\underset{S\in\mathscr K\cup\mathscr K^*}{\sup}\p\!M_{\chi_L}^\perp\circ\phi_w\circ S\!\p_{\mathbb B(\H,L^2)}\,\le\epsilon$\,.
\item
For every $\,\epsilon>0$ there exists $f\in C_{{\rm c}}(\Si)$ such that $\underset{S\in\mathscr K\cup\mathscr K^*}{\sup}\!\!\p\![\Pi(f)-1]S\!\p_{\mathbb B(\H)}\,\le\epsilon$
(also a tightness statement).
\item
$\{\Si\ni s\mapsto\pi(s)^*S\in\mathbb B(\H)\mid S\in\mathscr K\cup\mathscr K^*\}$ is an equicontinuous family.
\end{enumerate}
\end{Corollary}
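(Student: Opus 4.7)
The plan is to reduce the whole statement to a direct application of Theorem \ref{rebengiuk} to the bounded subset
$$
\Omega:=(\mathscr K\cup\mathscr K^*)\H_{[1]}=\bigcup_{S\in\mathscr K\cup\mathscr K^*}\!S\H_{[1]}\;\subset\H,
$$
which is bounded as soon as $\mathscr K$ is a bounded family in $\mathbb B(\H)$, since $\|S^*\|_{\mathbb B(\H)}=\|S\|_{\mathbb B(\H)}$. Relative compactness of $\mathscr K$ in the operator-norm topology (assertion 1) forces such boundedness, and each of assertions 2, 3, 4 trivially forces boundedness too (e.g.\ via assertion 3 combined with $\pi(s_1)^*=1$); thus Theorem \ref{rebengiuk} applies to $\Omega$ under any of the four conditions.

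First I would invoke the Astala--Tylli type characterization recalled right before Corollary \ref{cutare}: a subset $\mathscr L\subset\mathbb K(\mathcal X,\mathcal Y)$ is relatively compact if and only if both $\mathscr L$ and its family of transposes $\mathscr L'\subset\mathbb K(\mathcal Y',\mathcal X')$ are collectively compact. In the Hilbert-space case the canonical anti-unitary identification $\H'\cong\overline\H$ sends the transpose $S'$ to the adjoint $S^*$, so assertion 1 is equivalent to $\mathscr K\H_{[1]}$ \emph{and} $\mathscr K^*\H_{[1]}$ being simultaneously relatively compact in $\H$, hence (a finite union of relatively compact sets being relatively compact) to $\Omega$ itself being relatively compact.

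Second I would simply read off what each equivalent condition of Theorem \ref{rebengiuk} for this $\Omega$ says. Conditions 2 and 3 of the theorem, via $\phi^\pi_w(\Omega)=\bigcup_S\phi^\pi_w(S\H_{[1]})$ together with
$$
\|M_{\chi_L}^{\perp}\!\circ\phi^\pi_w\!\circ S\|_{\mathbb B(\H,L^2)}=\sup_{x\in\H_{[1]}}\|\chi_{L^c}\phi^\pi_w(Sx)\|_{L^2(\Si)},
$$
unfold to assertion 2 of the corollary. Condition 4 of the theorem reads $\sup_{u\in\Omega}\|[\Pi(f)-1]u\|\le\epsilon$; writing $u=Sx$ with $S\in\mathscr K\cup\mathscr K^*$ and $x\in\H_{[1]}$ turns this into $\sup_S\|[\Pi(f)-1]S\|_{\mathbb B(\H)}\le\epsilon$, which is assertion 3. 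Condition 5, $\lim_{s\to s_0}\sup_{u\in\Omega}\|\pi(s)^*u-\pi(s_0)^*u\|=0$ for every $s_0$, becomes $\lim_{s\to s_0}\sup_S\|\pi(s)^*S-\pi(s_0)^*S\|_{\mathbb B(\H)}=0$, i.e.\ the equicontinuity of the family $\{s\mapsto\pi(s)^*S\mid S\in\mathscr K\cup\mathscr K^*\}$ at each $s_0$, which is assertion 4. Since the standing assumption $\pi(s_1)^*=1$ gives the equivalence of all six conditions of Theorem \ref{rebengiuk}, the dictionary above transports this to the equivalence of 1--4 of the corollary.

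The only step I expect to be non-routine is the invocation of the Astala--Tylli duality: collective compactness of $\mathscr K$ alone is strictly weaker than relative compactness, as witnessed by the example $\{\langle\cdot,e_j\rangle e_1\mid j\in\N\}$ already discussed in this section, and it is precisely this gap that forces the symmetrisation $\mathscr K\cup\mathscr K^*$ in each of assertions 2--4. Once this reduction is imported from \cite{Pa,An}, the rest of the proof is a clean unfolding of suprema over $S\in\mathscr K\cup\mathscr K^*$ and $x\in\H_{[1]}$ inside the Hilbert-space criterion Theorem \ref{rebengiuk}.
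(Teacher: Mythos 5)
Your proposal is correct and is essentially the proof the paper intends: the corollary is meant to follow by combining the Palmer--Anselone collective-compactness criterion recalled before Corollary \ref{cutare} (the result you label Astala--Tylli, but correctly cite as \cite{Pa,An}), the identification of transposes with adjoints in the Hilbert case, and Theorem \ref{rebengiuk} applied to $\Omega=(\mathscr K\cup\mathscr K^*)\H_{[1]}$, exactly as you do. The one overstatement is the claim that assertions 2--4 ``trivially'' force boundedness of $\mathscr K$ --- this is not trivial and is best regarded as an implicit boundedness hypothesis shared with the paper's own formulation (Theorem \ref{rebengiuk} requires $\Omega$ bounded) --- but this does not alter the fact that your route coincides with the paper's.
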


%-------------------------------------------------------------------------------------------------------
\section{Compactness in the magnetic Weyl calculus}\label{one}
%-------------------------------------------------------------------------------------------------------

The magnetic pseudodifferential calculus \cite{MP1,IMP} has as a background the problem of quantization of a physical system consisting in a spin-less particle moving in the euclidean space $X:=\mathbb R^n$ under the influence of a magnetic field, i.e.
a closed $2$-form $B$ on $X$ ($dB=0$), given by matrix-component functions $B_{jk}=-B_{kj}:X\rightarrow\mathbb R\,,\ \ j,k=1,\dots,n$\,.
For convenience we are going to assume that the components $B_{jk}$ belong to $C^\infty_{\rm{pol}}(X)$\,,
the class of smooth functions on $X$ with polynomial bounds on all the derivatives.  The magnetic field can be written
in many ways as the differential $B=dA$ of some $1$-form $A$ on $X$ called {\it vector potential}.
One has $B=dA=dA'$ iff $A'=A+d\varphi$ for some $0$-form $\varphi$ (then they are called equivalent).
It is easy to see that the vector potential can also be chosen of class $C^\infty_{\rm{pol}}(X)$\,; this will be tacitly assumed.

One would like to develop a symbolic calculus $a\mapsto\Op^A(a)$ taking the magnetic field into account. Basic requirements are:
(i) it should reduce to the standard Weyl calculus \cite{Fo,Gr} for $A=0$ and (ii) the operators $\Op^A\!(a)$ and $\Op^{A'}\!(a)$ should
be unitarily equivalent (independently on the symbol $a$) if $A$ and $A'$ are equivalent; this is called {\it gauge covariance}
and has a fundamental physical meaning. There are many ways to justify the formulae, including geometrical or classical mechanics reasons or ideas coming from group cohomology and the theory of crossed product $C^*$-algebras. The one closest to our approach it to think of the emerging
symbolic calculus as a functional calculus for the family of non-commuting self-adjoint operators
$(Q_1,\dots,Q_n;P^A_1,\dots,P^A_n)$ in $\H:=L^2(X)$\,. Here $Q_j$ is one of the components of
the position operator, but the momentum $P_j:=-i\partial_j$ is replaced by {\it the magnetic momentum}
$P^A_j:=P_j-A_j(Q)$ where $A_j(Q)$ indicates the operator of multiplication with the function $A_j\in C^\infty_{\rm{pol}}(X)$\,. Notice the commutation relations
\begin{equation}\label{cocostarc}
i[Q_j,Q_k]=0\,,\quad i[P^A_j,Q_k]=\delta_{jk}\,,\quad i[P^A_j,P^A_k]=B_{jk}(Q)\,.
\end{equation}
Let us set $\Si:=X\times X^*$ (called {\it the phase space} and isomorphic to $\mathbb R^{2n}$)\,, on which we consider the Lebesgue measure $d\mu(x,\xi)\equiv dxd\xi$\,. One defines {\it the magnetic Weyl system}
\begin{equation}\label{termita}
\pi^A:\Si\rightarrow\mathbb B(\H)\,,\quad\ \pi^A(x,\xi):=\exp\[i\(x\cdot P^A-Q\cdot\xi\)\]
\end{equation}
and gets in terms of the circulation of the $1$-form $A$ through the segment $[y,y+x]:=\{y+tx\mid t\in[0,1]\}$ the explicit formula
\begin{equation}\label{croitor}
\[\pi^A(x,\xi)u\]\!(y)=e^{-i\(y+\frac{x}{2}\)\cdot\xi}\,\exp\[(-i)\!\underset{[y,y+x]}{\int}\!A\]\,u(y+x)\,.
\end{equation}
These operators depend strongly continuous of $(x,\xi)$ and satisfy $\pi^A(0,0)=1$ and $\pi^A(x,\xi)^*=\pi^A(x,\xi)^{-1}=\pi^A(-x,-\xi)$ (thus being unitary). {\it However they do not form a projective representation of $\,\Si=X\times X^*$.} Actually they satisfy
\begin{equation}\label{tzetze}
\pi^A(x,\xi)\,\pi^A(y,\eta)=\omega^B[(x,\xi),(y,\eta);Q]\,\pi^A(x+y,\xi+\eta)\,,
\end{equation}
where $\omega^B[(x,\xi),(y,\eta);Q]$ only depends on the $2$-form $B$ and denotes the operator of multiplication
in $L^2(X)$ by the function
\begin{equation}\label{fluture}
X\ni z\rightarrow\omega^B[(x,\xi),(y,\eta);z]:=\exp\left[\frac{i}{2}\,(y\cdot\xi-x\cdot\eta)\right]
\exp\left[(-i)\!\!\!\!\!\underset{<z,z+x,z+x+y>}{\int}\!\!\!\!B\,\right]\,.
\end{equation}
Here the distinguished factor is constructed with the flux (invariant integration) of the magnetic field
through the triangle defined by the corners $z$, $z+x$ and $z+x+y$.

A straightforward computation leads to {\it the magnetic Fourier-Wigner function}
\begin{equation*}
 \begin{aligned}
&\[\Phi^A(u\otimes v)\](x,\xi)\equiv\[\phi^A_v(u)\](x,\xi):=\<\pi^A(x,\xi)u,v\>\\
=&\int_X dy\,e^{-iy\cdot\xi}\,\exp\[(-i)\!\!\underset{[y-x/2,y+x/2]}{\int}\!\!A\]u(y+x/2)\,\overline{v(y-x/2)}.
\end{aligned}
\end{equation*}

It can be decomposed into the product of the multiplication by a function with values in the unit circle,
a change of variables with unit jacobian and a partial Fourier transform. All these are isomorphisms, so $\Phi^A:L^2(X)\widehat\otimes L^2(X)\rightarrow L^2(\Si)$ defines a unitary transformation. 
{\it Thus we get a formalism which is a particular case of the one presented at the end of section \ref{omorniella}.}
Therefore one can apply all the prescriptions and get the correspondence 
\begin{equation}\label{cutarel}
f\mapsto\Pi^A(f):=\int_{\Si}\!f(x,\xi)\,\pi^A(-x,-\xi)\,dxd\xi \,.
\end{equation}
In fact people are interested in the (symplectic) Fourier transformed version $a(Q,P^A\,)\equiv\Op^A(a):=\Pi^{A}[\mathfrak F^{-1}(a)]
$\,. The resulting {\it magnetic Weyl calculus} is given by
\begin{equation}\label{op}
\left[\mathfrak{Op}^{A}(a)u\right](x)=(2\pi)^{-n}\!\!\int_X\!\!dy\int_{X^*}\!\!\!d\xi\,\exp\left[i(x-y)\cdot\xi\right]
\exp\left[-i\int_{[x,y]}A\right]a\left(\frac{x+y}{2},\xi\right)u(y).
\end{equation}
An important property of (\ref{op}) is {\it gauge covariance}, as hinted above:
if $A'=A+d\rho$  defines the same magnetic field as $A$, then $\Op^{A'}\!(a)=e^{i\rho}\,\Op^{A}(a)\,e^{-i\rho}$.
By killing the magnetic phase factors in all the formulae above one gets the defining relations of the usual Weyl calculus.

Due to the particular structure, one can introduce $\{U^A(x):=\pi^A(x,0)\!\mid\!x\in X\}$ (generalizing the group of translations for $A\ne 0$) and $\{V(\xi):=\pi^A(0,\xi)\!\mid\!\xi\in X^*\}$ (the group generated by the position operator $Q$)\,. 
One can also introduce $\varphi(Q):=\Op^A(\varphi\otimes 1)$ and $\psi(P^A):=\Op^A(1\otimes\psi)$ for $\varphi\in L^2(X)$ and $\psi\in L^2(X^*)$\,. One checks easily that $\varphi(Q)$ is the operator of multiplication by $\varphi$ while for zero magnetic field $\psi(P^{A=0})\equiv\psi(P)$ is the operator of convolution by the Fourier transform of $\psi$\,. Since $\varphi\otimes 1$ and $1\otimes\psi$ are not $L^2$-functions in both variables, one needs the results of \cite{MP1,IMP} for an easy justification of these objects. Equivalently, one can use formulas as $\psi(P^A):=\int_X\!dx \,\widehat\psi(x)U^A(x)$\,.

The next result is inspired by \cite[Prop. 2.2]{GI} and basically reduces to \cite[Prop. 2.2]{GI} for $A=0$\,. By $\S(Y)$ we denote the Schwartz space on the real finite-dimensional vector space $Y$\,.

\begin{Proposition}\label{beligan}
The $C^*$-algebra $\,\mathbb K\!\[L^2(X)\]$ of compact operators in $L^2(X)$ coincides with the closed vector space $\mathfrak C$ generated in $\,\mathbb B\!\[L^2(X)\]$ by products $\varphi(Q)\psi(P^A)$ with $\varphi\in \S(X)$ and $\psi\in \S(X^*)$\,.
\end{Proposition}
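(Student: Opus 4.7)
The plan is to establish the two inclusions $\mathfrak C\subset\mathbb K[L^2(X)]$ and $\mathbb K[L^2(X)]\subset\mathfrak C$. The first is a Hilbert-Schmidt kernel calculation; the second reduces the problem to a standard density statement in $L^2(X\times X)$, with the magnetic field entering only as a unimodular multiplier.

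For $\mathfrak C\subset\mathbb K[L^2(X)]$, I would use the identity $\psi(P^A)=\int_X\widehat\psi(x)\,U^A(x)\,dx$ recalled at the end of section \ref{one} (with $U^A(x):=\pi^A(x,0)$) together with the explicit formula (\ref{croitor}) for $\pi^A(x,0)$. A change of the variable of integration shows that $\varphi(Q)\psi(P^A)$ is the integral operator with kernel
\[
K_{\varphi,\psi}(y,z)\,=\,(2\pi)^{-n/2}\,\varphi(y)\,\widehat\psi(z-y)\,\exp\!\Bigl[-i\!\!\int_{[y,z]}\!A\,\Bigr].
\]
Since the magnetic phase is unimodular,
\[
\iint_{X\times X}|K_{\varphi,\psi}(y,z)|^2\,dy\,dz\,=\,(2\pi)^{-n}\,\|\varphi\|_{L^2(X)}^{\,2}\,\|\widehat\psi\|_{L^2(X)}^{\,2}\,<\,\infty
\]
for $\varphi,\psi\in\S$, so $\varphi(Q)\psi(P^A)$ is Hilbert-Schmidt, hence compact. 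Closing under linear combinations and operator-norm limits gives the inclusion.

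For $\mathbb K[L^2(X)]\subset\mathfrak C$, I would note that finite linear combinations of rank-one operators $\lambda_{u,v}$ with $u,v\in\S(X)$ are operator-norm dense in $\mathbb K[L^2(X)]$ (since $\S(X)$ is dense in $L^2(X)$), so it suffices to prove each such $\lambda_{u,v}$ lies in $\mathfrak C$. Its kernel is $u(y)\overline{v(z)}$, and I aim to realise it as an $L^2(X\times X)$ limit of finite sums $\sum_n K_{\varphi_n,\psi_n}$, which forces Hilbert-Schmidt (hence operator-norm) convergence. Multiplication by the unimodular phase $\exp[i\int_{[y,z]}A]$ is an isometry of $L^2(X\times X)$, so the task reduces to approximating
\[
F(y,z)\,:=\,u(y)\,\overline{v(z)}\,\exp\!\Bigl[i\!\!\int_{[y,z]}\!A\Bigr]
\]
by sums $\sum_n\varphi_n(y)\widehat\psi_n(z-y)$. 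After the unit-Jacobian change of variables $(y,z)\mapsto(y,w:=z-y)$, this is a standard approximation of an $L^2(X\times X)$ function by finite sums of Schwartz tensor products; density of $\S(X)\otimes\S(X)$ in $L^2(X\times X)$ yields approximants $\sum_n\varphi_n\otimes\eta_n$, after which I would set $\psi_n:=$ Fourier pre-image of $\eta_n\in\S(X)$, which lies in $\S(X^*)$ by the Fourier isomorphism.

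The main obstacle is the rigorous justification of the identity $\psi(P^A)=\int\widehat\psi(x)\,U^A(x)\,dx$ and of the kernel formula for $\varphi(Q)\psi(P^A)$: $P^A$ does not generate a translation group and the symbol $1\otimes\psi$ is not in $L^2(\Si)$, so neither ingredient is an immediate consequence of the formalism of section \ref{omorniella}. The paper explicitly delegates this to \cite{MP1,IMP}, and I would simply invoke those references rather than re-derive the formulae. Once this is in place the magnetic field plays essentially no role beyond its unimodular phase, and the argument reduces cleanly to \cite[Prop.~2.2]{GI} in the case $A=0$.
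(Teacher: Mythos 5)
Your proposal is correct and follows essentially the same route as the paper: both inclusions rest on identifying $\varphi(Q)\psi(P^A)$ as the Hilbert--Schmidt operator with kernel $e^{-i\int_{[x,y]}A}\,\varphi(x)\widehat\psi(y-x)$ and on the fact that the unimodular magnetic phase and the shear $(x,y)\mapsto(x,y-x)$ act unitarily on kernels in $L^2(X\times X)$. The only cosmetic difference is in the reverse inclusion, where the paper expands an arbitrary Hilbert--Schmidt kernel in the explicit twisted orthonormal basis $F^A_{ij}(x,y)=e^{-i\int_{[x,y]}A}\,e_i(x)e_j(y-x)$ built from a Schwartz orthonormal basis of $L^2(X)$, whereas you reduce to rank-one operators with Schwartz vectors and invoke density of $\S(X)\otimes\S(X^*)$ after removing the phase; both yield the same $\mathbb B_2$-norm (hence operator-norm) approximation by elements of $\mathfrak C$.
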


\begin{proof}
It is easy to check that $\varphi(Q)\psi(P^A)$ is an integral operator with kernel given for $x,y\in X$ by
\begin{equation}\label{aceea}
k^A_{\varphi,\psi}(x,y)=e^{-i\int_{[x,y]}A}\,\varphi(x)\widehat\psi(y-x).
\end{equation}
We assumed the components of $A$ to be $C^\infty_{{\rm pol}}$-functions and this immediatly implies that the magnetic phase factor in (\ref{aceea}) belongs to $C^\infty_{{\rm pol}}(X\times X)$\,. Therefore, if $\varphi\in\S(X)$ and $\psi\in\S(X)$\,, then $k^A_{\varphi,\psi}\in\S(X\times X)\subset L^2(X\times X)$ and thus $\varphi(Q)\psi(P^A)$
is a Hilbert-Schmidt operator. From this follows $\,\mathbb K\!\[L^2(X)\]\supset\mathfrak C$\,.

\medskip
Reciprocally, it is enough to show that $\mathfrak C$ contains all the integral operators with kernel
$k\in L^2(X\times X)$ (they are the Hilbert-Schmidt operators and form a dense set in $\mathbb K\!\[L^2(X)\]$)\,. Pick inside the Schwartz space $\mathcal S(X)$ an orthonormal base $\{e_i\!\mid\!i\in\N\}$
for $L^2(X)$\,. Setting
$$
F^A_{ij}(x,y):=e^{-i\int_{[x,y]}A}\,e_i(x)e_j(y-x)\,,\quad\ \forall\,x,y\in X,\ i,j\in\N\,,
$$
we get an orthonormal base $\{F^A_{ij}\mid i,j\in\N\}$ of $L^2(X\times X)$\,. So $k=\sum_{i,j}c_{ij}F^A_{ij}$, where $\sum_{i,j}|c_{ij}|^2<\infty$ and
the sum is convergent in $L^2(X\times X)$. Then the integral operator with kernel $k$ coincides with
$\sum_{i,j}c_{ij}e_i(Q)\widehat e_j(P^A)$\,. The sum converges in $\mathbb B_2\!\[L^2(X)\]$, thus in $\mathbb B\!\[L^2(X)\]$, therefore
the operator belongs to $\mathfrak C$\,.
\end{proof}

We can also state:

\begin{Theorem}\label{arsinel}
Let $\,\Omega$ a bounded subset of $\,\H:=L^2(X)$\,. The following statements are equivalent:

\begin{enumerate}
\item
The set $\,\Omega$ is relatively compact.
\item
For some (any) window $w\in\H$\,, the family $\,\phi^A_w(\Omega)$ is $\mathcal K(\Si)$-tight in $L^2(\Si)$\,.
\item
For every $\,\epsilon>0$ there exist $f\in C_{{\rm c}}(\Si)$ with $\,\underset{u\in\Omega}{\sup}\,\big\Vert\!\[\Op^A(\widehat f)-1\]\!u\,\big\Vert\,\le\epsilon$\,.
\item
One has
\begin{equation}\label{calboreanu}
\lim_{(x,\xi)\rightarrow 0}\,\sup_{u\in\Omega}\parallel\!\left[\pi^A(x,\xi)-1\right]u\!\parallel\,=\,0\,.
\end{equation}
\item
One has
\begin{equation}\label{stela}
\lim_{x\rightarrow 0}\,\sup_{u\in\Omega}\parallel\!\left[U^A(x)-1\right]u\!\parallel\,=\,0\quad{\rm and}\quad\lim_{\xi\rightarrow 0}\,\sup_{u\in\Omega}\parallel\![V(\xi)-1]\,u\!\parallel\,=\,0\,.
\end{equation}
\item
For every $\,\epsilon>0$ there exist $\varphi\in \S(X)$ and $\psi\in \S(X^*)$ with
\begin{equation}\label{epopoescu2}
\sup_{u\in\Omega}\(\,\parallel\![\varphi(Q)-1] u\!\parallel+\parallel\!\[\psi(P^A)-1\]u\!\parallel\,\)\le\epsilon\,.
\end{equation}

\end{enumerate}
\end{Theorem}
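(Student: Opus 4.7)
The plan is to split the proof into three blocks. Conditions 1, 2, 3 and 4 will come from direct application of Theorem \ref{rebengiuk}; the equivalence $4 \Leftrightarrow 5$ will be extracted from the cocycle identity (\ref{tzetze}); and the equivalence $1 \Leftrightarrow 6$ will follow by combining Proposition \ref{beligan} with standard position/momentum tightness. To set the stage, note that $\pi^A$ is strongly continuous with $\pi^A(0,0) = 1$, and the unitarity of $\Phi^A$ (stated just after (\ref{fluture})) is exactly the square-integrability condition (\ref{barrymore}). Hence Theorem \ref{rebengiuk} applies with $s_1 = (0,0)$. Its conditions 1 and 2/3 translate verbatim into conditions 1 and 2 here; its condition 4 translates into condition 3 via the symplectic Fourier inversion $\Op^A(\widehat f) = \Pi^A[\mathfrak F^{-1}(\widehat f)] = \Pi^A(f)$, so that $\{\Op^A(\widehat f)\mid f \in C_{\rm c}(\Si)\} = \Pi^A[C_{\rm c}(\Si)]$; finally, condition 5 of Theorem \ref{rebengiuk} taken at the single point $s_0 = (0,0)$ is the present condition 4, and under the standing hypothesis $\pi^A(s_1)^* = 1$ this already forces the equivalence with conditions 1--3.

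For $4 \Leftrightarrow 5$ the direction $4 \Rightarrow 5$ is immediate by restricting to $\xi = 0$ or $x = 0$. For the converse I would use the factorization
\begin{equation*}
\pi^A(x,\xi) \,=\, e^{i x \cdot \xi/2}\, U^A(x)\, V(\xi)\,,
\end{equation*}
which drops out directly from (\ref{croitor}), and equivalently from (\ref{tzetze}) upon noticing that the triangle $\<z,z+x,z+x\>$ is degenerate, so $\omega^B[(x,0),(0,\xi);z] = e^{-ix\cdot\xi/2}$. Unitarity of $U^A(x)$ then yields
\begin{equation*}
\p\!\pi^A(x,\xi)u - u\!\p \,\le\, |e^{ix\cdot\xi/2} - 1|\,\p\!u\!\p \,+\, \p\!V(\xi)u - u\!\p \,+\, \p\!U^A(x)u - u\!\p\,,
\end{equation*}
and each summand tends to $0$ uniformly in $u \in \Omega$ as $(x,\xi) \to 0$ by boundedness of $\,\Omega$ and assumption 5.

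For $1 \Rightarrow 6$ I invoke the already-established $1 \Leftrightarrow 5$. Relative compactness of $\Omega$ entails position tightness, so for $\epsilon > 0$ I pick $R$ with $\sup_{u\in\Omega}\int_{|x|>R}|u(x)|^2 dx < \epsilon^2/4$ and choose $\varphi \in C_{\rm c}^\infty(X) \subset \S(X)$ equal to $1$ on $B_R$ with $0 \le \varphi \le 1$; this gives $\p\![\varphi(Q) - 1]u\!\p \le \epsilon$ uniformly. For the momentum factor I take $\widehat\psi \in C_{\rm c}^\infty(X)$ with $\int\widehat\psi = 1$ and support in a small ball $B_\delta$, so that $\psi \in \S(X^*)$ with $\psi(0) = 1$, and the formula $\psi(P^A) = \int\widehat\psi(x) U^A(x) dx$ combined with assumption 5 yields
\begin{equation*}
\p\![\psi(P^A) - 1]u\!\p \,\le\, \int|\widehat\psi(x)|\,\p\![U^A(x) - 1]u\!\p\,dx \,\le\, \sup_{|x|\le\delta}\p\![U^A(x) - 1]u\!\p\,,
\end{equation*}
which is $\le \epsilon$ for $\delta$ small, uniformly in $u \in \Omega$. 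Conversely, for $6 \Rightarrow 1$, Proposition \ref{beligan} (or directly the kernel (\ref{aceea})) shows $\varphi(Q)\psi(P^A) \in \mathbb K[L^2(X)]$; the identity
\begin{equation*}
\varphi(Q)\psi(P^A) - 1 \,=\, \varphi(Q)[\psi(P^A) - 1] \,+\, [\varphi(Q) - 1]
\end{equation*}
turns condition 6 into $\sup_{u\in\Omega}\p\![\varphi(Q)\psi(P^A) - 1]u\!\p \le (\p\!\varphi\!\p_\infty + 1)\epsilon$, and the argument used for $4 \Rightarrow 1$ in the proof of Theorem \ref{rebengiuk} (totally bounded image under a compact operator plus triangle inequality) then delivers total boundedness of $\,\Omega$. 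The one step I expect to be genuinely non-mechanical is the cocycle bookkeeping underlying the factorization of $\pi^A(x,\xi)$; everything else is a routine combination of the abstract machinery of Sections \ref{poponel}--\ref{onecu} with classical Kolmogorov--Riesz tightness.
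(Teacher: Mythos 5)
Your proposal is correct and its main blocks coincide with the paper's: Theorem \ref{rebengiuk} for conditions 1--3, the factorization $\pi^A(x,\xi)=e^{ix\cdot\xi/2}\,U^A(x)V(\xi)$ read off from (\ref{croitor})/(\ref{tzetze}) for $4\Leftrightarrow 5$, and the Hilbert--Schmidt kernel (\ref{aceea}) of $\varphi(Q)\psi(P^A)$ for condition 6. The genuine difference is how the circle through 4--6 is closed. The paper does \emph{not} claim $4\Rightarrow 1$ from Theorem \ref{rebengiuk}: it only extracts $3\Rightarrow 4$ (taking $s_0=0$), remarks that $3\Leftrightarrow 4$ would require the cocycle (\ref{tzetze})--(\ref{fluture}) to restore convergence at arbitrary $s_0$, and instead closes the loop via $4\Leftrightarrow 5\Rightarrow 6\Rightarrow 3$, proving $5\Rightarrow 6$ by averaging both $U^A$ and $V$ against bump functions and $6\Rightarrow 3$ by approximating $\varphi(Q)\psi(P^A)$ in operator norm by some $\Op^A(\widehat f)$ with $f\in C_{\rm c}(\Si)$. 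You instead assert $4\Rightarrow$ (1--3) ``from Theorem \ref{rebengiuk}''; strictly, its statement does not give this, because its condition 5 quantifies over \emph{all} $s_0$ while your condition 4 is that condition at the single point $(0,0)$, and this assertion is exactly what your equivalence of 4--5 with the rest hinges on. It is nonetheless true: the mollification in the paper's proof of $5\Rightarrow 6$ is local in $s_0$, so a positive normalized $g\in C_{\rm c}(\Si)$ supported near the origin gives $\sup_{u\in\Omega}\parallel\!\Pi^A(g)u-u\!\parallel\,\le\,\sup_{s\in\supp g}\sup_{u\in\Omega}\parallel\!\pi^A(-s)u-u\!\parallel$, i.e.\ condition 3 --- a two-line argument you effectively already use (in the guise $\psi(P^A)=\int_X\widehat\psi(x)U^A(x)dx$) in your $1\Rightarrow 6$ step; spelled out, your route is complete and even yields the $3\Leftrightarrow 4$ the paper says it does not need. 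Your $1\Rightarrow 6$ (position tightness of a relatively compact set for the $\varphi$-factor, averaging of $U^A$ for the $\psi$-factor) replaces the paper's $5\Rightarrow 6$ (averaging for both factors), and your $6\Rightarrow 1$ goes directly to total boundedness rather than back to condition 3; both work. Note only that your bound $(\parallel\!\varphi\!\parallel_\infty+1)\epsilon$ carries the same uncontrolled-constant caveat as the paper's own display in its $6\Rightarrow 3$ step ($\varphi$ from condition 6 has no a priori sup-norm bound; one repairs this by truncating $\varphi$), so this is not a gap relative to the paper.
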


\begin{proof}
 $1 \Leftrightarrow 2 \Leftrightarrow 3$ follow from Theorem \ref{rebengiuk} by particularization, while $4\Leftrightarrow 5\,$ is trivial, taking into account the relathionships between $U^A, V$ and $\pi^A$\,. The implication $3 \Rightarrow 4 $ also holds, taking $s_0=0$ in Theorem \ref{rebengiuk} (and replacing $s$ by $-s$)\,.
 A careful examination of (\ref{tzetze}) and (\ref{fluture}) would even lead to $3 \Leftrightarrow 4 $\,, restauring the relevant convergence for arbitrary $s_0:=(x_0,\xi_0)$\,,
 but this will not be needed. $1\Rightarrow 5$ follows trivially, because $\Omega$ can be approximated by finite sets and $U^A,V$ are strongly continuous at the origin.
 
  \medskip
$5 \Rightarrow 6$ can be obtained along the same lines as the proof of the implication $4\Rightarrow 5$ in Theorem \ref{rebengiuk}, taking also into account the relations  $\psi(P^A)=\int_X\!dx \,\widehat\psi(x)U^A(x)$ and $\varphi(Q)=\int_{X^*}\!d\xi \,\widehat\varphi(\xi)V(\xi)$\,.
 
 \medskip
We finally show $6 \Rightarrow 3$\,. Let us set $T^\perp:=1-T$ and compute 
\begin{align*}
\p\!u-\varphi(Q)\psi(P^A)u\!\p&=\,\p\!\varphi(Q)\psi(P^A)^\perp u+\varphi(Q)^\perp u \!\p\\
 &\leq\,\p\!\varphi\!\p_\infty\p\!\psi(P^A)^\perp u\!\p+\p\!\varphi(Q)^\perp u \!\p\,.
\end{align*}
By using the assumption 6\,, this can be made arbitrary small uniformly in $u \in \Omega$ if $\varphi,\psi$ are chosen suitably.
As in the proof of Proposition \ref{beligan} one sees that $\varphi(Q)\psi(P^A)$ is a Hilbert-Schmidt operator. It can be approximated arbitrarily in norm by some operator $\Op^A(\widehat f)$ with $f\in C_{{\rm c}}(\Si)$ and then 3 follows easily because $\Omega$ is bounded.
\end{proof}

\begin{Remark}\label{dracu}
{\rm Many small variations are allowed in the results above. The Schwartz spaces $\S(X)$ and $\S(X^*)$ in Proposition \ref{beligan} or at point 6 of Theorem \ref{arsinel} can be replaced by other convenient "small" spaces. In Theorem \ref{arsinel}, at point 3 one could use $\Op^A(a)$ with $a\in\S(\Si)$ or with $a\in C_{{\rm c}}^\infty(\Si)$\,.
}
\end{Remark}

\begin{Remark}\label{anghelescu}
{\rm Compact operators and relative compact families of compact operators can also be treated easily in the magnetic setting, essentially combining the results of sections \ref{onecu} and \ref{one}.
}
\end{Remark}

\begin{Remark}\label{Franz}
{\rm This is connected to Remark \ref{siedentop}. In \cite[Sect. 5]{GI} one can find improvements of a classical result of Ruelle, characterizing the pure point space $\H_{{\rm p}}(H)$ and the continuous space $\H_{{\rm c}}(H):=\[\H_{{\rm p}}(H)\]^\perp$ of a self-adjoint operator $H$ acting in $L^2(X)$\,. This involves operators $\varphi(Q)$ (multiplication by $\varphi$) and $\psi(P)$ (convolution by the Fourier transform of $\psi$) that are obtained by setting $A=0$\,. The analogous magnetic results are also easily available, as corollaries of Theorem \ref{arsinel}, and they seem to be new and physically significant. A detailed discussion would need too many preparations, so we do not include it here; the interested readers would easily find the statements and the proofs by themselves.
}
\end{Remark}

%-------------------------------------------------------------------------------------------------------


\begin{thebibliography}{00}
%-------------------------------------------------------------------------------------------------------

\bibitem{An} Ph. M. Anselone: {\it Compactness Properties of Sets of Operators and Their Adjoints}, Math. Z. {\bf 113}, 233--236 (1970).

\bibitem{BB1} I. Belti\c t\u a and D. Belti\c t\u a: {\it Magnetic Pseudo-differential Weyl Calculus on Nilpotent Lie Groups.}
Ann. Global Anal. Geom. {\bf 36} (3), 293--322, (2009).

\bibitem{BB2} D. Belti\c t\u a and I. Belti\c t\u a: {\it Continuity of Magnetic Weyl Calculs},
J. Funct. Analysis, {\bf 260} (7), 1944-1968, (2011).

\bibitem{BS} C. Bennett and R. Sharpley: \emph{Interpolation of Operators}, Academic Press, Inc. 1988.

\bibitem{DFG} M. D\"orfler, H. Feichtinger and K. Gr\"ochenig: \emph{Compactness Criteria in Function Spaces},
Colloq. Math. {\bf 94} (1), 37--50, (2002).

\bibitem{Fei} H. G. Feichtinger: \emph{Compactness in Translational Invariant Banach Spaces of Distributions and Compact Multipliers},
J. Math. Anal. Appl. {\bf 102} (2), 289--327, (1984).

\bibitem{FG} H. G. Feichtinger and K. Gr\"ochenig: {\it Banach Spaces Associated to Integrable Group Representations and their
Atomic Decompositions I}, J. Funct. Anal. {\bf 86}, 307--340, (1989).

\bibitem{Fo} G. B. Folland: \emph{Harmonic Analysis in Phase Space}, Annals of Mathematics Studies, {\bf 122}. Princeton University Press, Princeton, NJ, 1989.

\bibitem{FR} M. Fornasier and H. Rauhut: {\it Continuous Frames, Function Spaces and the Discretization Problem}, J. Fourier Anal. Appl. {\bf 11} (3), 245--287, (2005).

\bibitem{GF} F. Galaz-Fontes: \emph{Note on Compact Sets of Compact Operators on a Reflexive and Separable Space}, Proc. AMS, {\bf 126} (2), 587--588, (1998).

\bibitem{GI} V. Georgescu and A. Iftimovici: \emph{Riesz-Kolmogorov Compacity Criterion, Ruelle Theorem and Lorentz Convergence on Locally
Compact Abelian Groups}, Potential Anal. {\bf 20} (3), 265Ð-284, (2004).

\bibitem{Gr} K. Gr\"ochenig: {\it Foundations of Time-Frequency Analysis}, Birkh\"auser Boston Inc., Boston, MA, (2001).

\bibitem{GS} K. Gr\"ochenig and T. Sthromer: {\it Pseudodifferential Operators on Locally Compact Abelian Groups
and Sj\"ostrand's Symbol Class}, J. Reine Angew. Math. {\bf 613}, 121--146, (2007).

\bibitem{IMP} V. Iftimie, M. M\u antoiu and R. Purice: {\it Magnetic Pseudodifferential Operators}, Publ. RIMS. {\bf 43}, 585--623, (2007).

\bibitem{Ja} H. Jarchow: {\it Locally Convex Spaces}, B. G. Teubner, Stuttgart, 1981.

\bibitem{Ma1} M. M\u antoiu: \emph{Quantization Rules, Hilbert Algebras and Coorbit Spaces for Families of Bounded Operators}, Preprint ArXiV and submitted.

\bibitem{MP1} M. M\u antoiu and R. Purice, \emph{The Magnetic Weyl Calculus}, J. Math. Phys. {\bf 45} no.~4 (2004), 1394--1417.

\bibitem{My} F. Mayoral: {\it Compact Sets of Compact Operators in Absence of $l^1$}, Proc. AMS, {\bf 129} (1), 79--82 , (2000).

\bibitem{Pa} T. W. Palmer: {\it Totally Bounded Sets of Precompact Linear Operators}, Proc. AMS, {\bf 20}, 101--106, (1969).

\bibitem{Pe} N. V. Pedersen: {\it Matrix Coefficients and a Weyl Correspondence for Nilpotent Lie Groups}, Invent. Math.
{\bf 118}, 1--36, (1994).

\bibitem{RU} H. Rauhut and T. Ullrich: {\it Generalized Coorbit Space Theory and Inhomogeneous Function Spaces of Besov-Lizorkin-Triebel Type}, J.\ Funct.\ Anal.
{\bf 260} (11), 3299-3362, (2011).

\bibitem{SPD} E. Serrano, C. Pineiero and J. M. Delgado: {\it Equicompact Sets of Operators Defined on Banach Spaces}, Proc. AMS, {\bf 134} (3), 689--695, (2005).


\end{thebibliography}
\end{document}